\newcommand{\Hmm}[1]{\leavevmode{\marginpar{\tiny%
			$\hbox to 0mm{\hspace*{-0.5mm}$\leftarrow$\hss}%
			\vcenter{\vrule depth 0.1mm height 0.1mm width \the\marginparwidth}%
			\hbox to
			0mm{\hss$\rightarrow$\hspace*{-0.5mm}}$\\\relax\raggedright #1}}}
\newtheorem{theorem}{Theorem}[section]
\newtheorem{lemma}[theorem]{Lemma}
\newtheorem{corollary}[theorem]{Corollary}
\newtheorem{definition}[theorem]{Definition}
\newtheorem{remark}[theorem]{Remark}
\newtheorem{proposition}[theorem]{Proposition}
\begin{document}

\title[The Existence of ground state solutions for $p$-Laplacian equations on lattice graphs]{The Existence of ground state solutions for nonlinear $p$-Laplacian equations on lattice graphs}

\author{Bobo Hua}
\address{Bobo Hua: School of Mathematical Sciences, LMNS, Fudan University, Shanghai 200433, China; ; Shanghai Center for Mathematical Sciences, Jiangwan Campus, Fudan University, No. 2005 Songhu Road, Shanghai 200438, China.}
\email{bobohua@fudan.edu.cn}

\author{Wendi Xu}
\address{Wendi Xu: School of Mathematical Sciences, Fudan University, Shanghai 200433, China}
\email{wdxu19@fudan.edu.cn}

\begin{abstract}
	In this paper, we study the nonlinear $p$-Laplacian equation
	$$-\Delta_{p} u+V(x)|u|^{p-2}u=f(x,u) $$
with positive and periodic potential $V$ on the lattice graph $\mathbb{Z}^{N}$, where $\Delta_{p}$ is the discrete $p$-Laplacian, $p \in (1,\infty)$. 
The nonlinearity $f$ is also periodic in $x$ and satisfies the growth condition 
$|f(x,u)| \leq a(1+|u|^{q-1})$  for some $ q>p$.
 We first prove the equivalence of three function spaces on $\mathbb{Z}^{N}$, which is quite different from the continuous case and allows us to remove the restriction $q>p^{*}$ in \cite{MR2768820}, where $p^{*}$ is the critical exponent for
 $ W^{1,p}(\Omega) \hookrightarrow L^{q}(\Omega)$ with $\Omega \subset \mathbb{R}^{N}$ bounded. 
  Then, using the method of Nehari \cite{MR111898,MR123775}, we prove the existence of ground state solutions to the above equation.
\end{abstract}
\par
\maketitle

\bigskip

\section{introduction}
The semilinear Schr\"{o}dinger type equation of the form
\begin{equation}\label{eq1} 
		-\Delta u + V(x)u = f(x,u), \quad 
		u\in W^{1,2}(\Omega),
\end{equation}  
where $\Omega \subset \mathbb{R}^{N}$, $V: \Omega \to \mathbb{R}$ is a given potential and $f: \Omega \times \mathbb{R} \to \mathbb{R}$ is a nonlinearity,
has been extensively studied in the literature, see, e.g., \cite{  MR709644,MR1162728,MR1163431,MR1762697,MR2005933,MR2120993,MR2271695, MR2920489} and the references therein. 
The $p$-Laplacian equation 
\begin{equation}\label{eq2} 
	-\Delta_{p} u + V(x)|u|^{p-2}u = f(x,u) 
\end{equation}  
is a nonlinear generalization of the problem \eqref{eq1} to $p \in (1,\infty)$.
In the note \cite{MR2768820}, the authors consider the zero boundary value problem of equation \eqref{eq2} with $V(x)=-\lambda > - \lambda_{1} $, where $\lambda_{1}$ is the first Dirichlet eigenvalue of the $p$-Laplacian. They obtain ground state solutions to \eqref{eq2} via the  Nehari method, which was first introduced in \cite{MR111898} to find a solution to the zero boundary value problem of \eqref{eq1} with $V(x) = \lambda \geq 0$ and $\Omega = (a,b)\subset \mathbb{R}$. Roughly speaking, the Nehari method is to minimize the corresponding functionals on  constrained manifolds and our goal is to use this method to study equation \eqref{eq2} on lattice graphs.

In recent years, analysis on graph has attracted the attention because of its importance in applications, see e.g., \cite {MR3316971, MR3523107, MR3665801,MR4053610,MR4482272,MR4581156}. 
Our work is inspired by the  following works. In \cite{MR3542963}, the authors obtain a positive solution to the Yamabe equation
\begin{displaymath}
	\left\{
	\begin{aligned}
		& -\Delta u - \alpha u = |u|^{p-2}u \quad \text{ in } \Omega^{0}\\
		&u=0 \quad \text{ on } \partial \Omega\\ 
	\end{aligned}
	\right.
\end{displaymath}
on a finite subgraph $\Omega$ in a locally finite graph via the mountain pass theorem. They also consider similar problems involving the $p$-Laplacian and poly-Laplacian by the same method. In \cite{MR3833747}, N. Zhang and L. Zhao prove via the Nehari method that the Schr\"{o}dinger type equation $$-\Delta u +(\lambda a(x)+1) u = |u|^{p-1}u $$ admits a ground state solution $u_{\lambda}$ for each $\lambda>1$ and $u_{\lambda}$ converges to the solution of a Dirichlet problem as $\lambda \to \infty$. In \cite{MR4344559}, X. Han and M. Shao generalize their work from Laplace operator to $p$-Laplace problem, $p\geq 2$, and consider more general nonlinearities. 
In our previous work \cite{MR4568177}, we prove the existence of ground state solutions of \eqref{eq1}  with  periodic or bounded potential on a lattice graph by the Nehari method. In this paper, we use this method to obtain ground state solutions of the $p$-Laplacian equation $\eqref{eq2}$ on lattice graphs.

 We first introduce the basic setting on graphs. Let $G=(\mathbb{V},\mathbb{E})$ be a simple, undirected, locally finite graph, where $\mathbb{V}$ denotes a set of vertices and $\mathbb{E}$ denotes a set of edges.
 We write $x \sim y \,( x \text{ is a neighbour of }y )$ if $ \{ x,y\}\in \mathbb{E}$. 
 The graph $G$ is undirected means that each  $ \{ x,y\}\in \mathbb{E}$ is unordered. A graph is called locally finite if each vertex has finitely many neighbours. 
We denote by $\mathbb{Z}^{N} = (\mathbb{V},\mathbb{E})$ the standard lattice graph with
$$ \mathbb{V}:= \{x=(x_{1},\cdots,x_{N}):x_{i}\in \mathbb{Z}, 1 \leq i \leq N\},$$
$$\mathbb{E}:= \Big\{ \{ x,y\}: x,y\in\mathbb{V},\, \sum_{i=1}^{N}|x_{i}-y_{i}|=1\Big\}.$$ 
Let $C(\mathbb{V})$ be the set of all functions on $\mathbb{Z}^{N}$ and $C_{c}(\mathbb{V})$ be the set of all functions with finite support.
Define the difference operator $\nabla_{xy} : C(\mathbb{V}) \to \mathbb{R}$ for any pair $\{x,y\} \in \mathbb{E}$ as 
$\nabla_{xy} u = u(y) - u(x).$ \\
For $p\in (1,\infty)$, the discrete $p$-Laplacian of $u\in C(\mathbb{V})$ is defined as
$$ \Delta_{p} u(x) :=\sum_{y\in \mathbb{V}, y \sim x}  |u(y)-u(x)|^{p-2}\big(u(y)-u(x)\big),\quad \forall x\in\mathbb{V}.$$
For $p\in [1,\infty)$, let $\mathcal{E}_{p}: C(\mathbb{V}) \to \mathbb{R}$ be the $p$-Dirichlet functional on $\mathbb{Z}^{N}$, defined as 
$$\mathcal{E}(u) := \frac{1}{2}\sum_{x,y\in \mathbb{V}, x\sim y} |u(x)-u(y)|^{p} .$$
In this paper, we adopt the definition of $\Delta_{p}$ in \cite{MR4058211}, which is different from that in \cite{MR4344559} when $p \neq 2$. This allows us to extend our results to $p\in(1,\infty)$.
Let $e_{i}$ be the unit vector in the $i$-th coordinate.
 For T $\in \mathbb{N}$, $g\in C(\mathbb{V})$ is called T-periodic if 
$$g(x+Te_{i})=g(x),\quad \quad \forall x\in\mathbb{Z}^{N},1\leq i \leq N.$$

To state our work, we introduce the following assumptions on the nonlinearity $f:\mathbb{V} \times \mathbb{R}\to \mathbb{R}$ and the potential $V\in C(\mathbb{V})$ for the equation \eqref{eq2}.\\ 
$(A_{1})$\,$f(x,u)$  is continuous with respect to $u\in \mathbb{R}$ and satisfies the growth condition: $\exists\, a>0, q>p$, s.t.  
\begin{equation}\notag
	|f(x,u)| \leq a(1+|u|^{q-1}), \quad \forall x\in\mathbb{V}.
\end{equation} 
$(A_{2})$\, $V(x)$ and $f(x,u)$ are both T-periodic in   $x$ 
and $V(x)>0$ for each vertex $x$.\\
$(A_{3})$\,$f(x,u) = o(|u|^{p-1})$ uniformly in $x$ as $u \to 0$.\\ 
$(A_{4})$\,$u\mapsto \frac{f(x,u)}{|u|^{p-1}}$ is strictly increasing on $(-\infty,0)$ and $(0,+\infty)$ respectively.\\   
$(A_{5})$\,$ \frac{F(x,u)}{|u|^p} \to \infty $ uniformly in $x$ as $|u| \to \infty $, where $F(x,u)= \int_{0}^{u} f(x,s) \,d s $.\\



Let $\ell^{p}(\mathbb{V},\mu), p\in[1,\infty],$ be the space of $\ell^{p}$ summable functions on $\mathbb{V}$ w.r.t. the counting measure $\mu$. We write $\|\cdot\|_{p}$ as the $\ell^{p}(\mathbb{V},\mu)$ norm, i.e., 
\begin{equation}\notag
	\|u\|_{p}:=
	\left\{ 	\begin{aligned}
		                  & \sum_{x\in\mathbb{V}} |u(x)|^{p},\quad 1\leq p < \infty,\\
		                  &\sup_{x\in\mathbb{V}} |u(x)|,\quad \quad  p=\infty.
               	\end{aligned}
	\right.
\end{equation}
As we know, for $1\leq p< \infty $, $\ell^{p}(\mathbb{V})$ is the completion of  $C_{c}(\mathbb{V})$ under the norm ${\| \cdot \|_{p}}$. Let
$W^{1,p}(\mathbb{V}) =\{ u\in C(\mathbb{V}) : \|u\|_{W^{1,p}(\mathbb{V})} < \infty \},$  $ 1< p <\infty,$ where
 \begin{displaymath}
 	\begin{aligned}
 		\|u\|_{W^{1,p}(\mathbb{V})} 
 		&:=\big( \mathcal{E}_{p}(u)+\|u\|_{p} \big)^{\frac{1}{p}}\\
 		&= \Big( \sum_{x,y\in \mathbb{V},x\sim y} \frac{1}{2}| \nabla_{xy}u |^{p} + \sum_{x\in \mathbb{V}}|u(x)|^{p} \Big)^{\frac{1}{p}}.
 	\end{aligned}
 \end{displaymath}
 $$$$
Moreover, from assumption $(A_{2})$, there exist
 $V_{1}, V_{2}>0$ such that $0<V_{1} \leq V(x) \leq V_{2}$ for each $x\in \mathbb{V}$. 
 We can define a new norm $\| \cdot \|$ on $C_{c}(\mathbb{V})$ by
 $$\|u\| := \Big( \sum_{x,y\in \mathbb{V},x\sim y}\frac{1}{2}| \nabla_{xy}u  |^{p} + \sum_{x\in \mathbb{V}}V(x)|u(x)|^{p} \Big)^{\frac{1}{p}}$$
and set 
$$E =\{ u\in C(\mathbb{V}) : \|u\| < \infty \}.$$
The variational functional corresponds to the equation $\eqref{eq2}$ is
\begin{displaymath}\label{functional}
	\begin{aligned}
		\Phi(u) &= \frac{1}{p} \big(\sum_{x,y\in \mathbb{V},x\sim y}  \frac{1}{2}| \nabla_{xy}u  |^{p} +\sum_{ x\in\mathbb{V}}V(x)|u(x)|^{p}\big)  - \sum_{x\in \mathbb{V}} F(x,u)\\
		&=  \frac{1}{p} \|u\|^{p} - \sum_{x\in \mathbb{V}} F(x,u).
	\end{aligned}
\end{displaymath}
One can check that $\Phi \in C^{1}(E,\mathbb{R})$ with the derivative given by
$$
\Phi'(u)v = \sum_{x,y\in \mathbb{V},x\sim y} \frac{1}{2}| \nabla_{xy}u |^{p-2} (\nabla_{xy}u) (\nabla_{xy}v) +\sum_{ x\in \mathbb{V}}V(x)|u(x)|^{p-2}uv - \sum_{x\in \mathbb{V}} f(x,u)v
,\quad \forall v\in E.
$$
In particular,
$$
\Phi'(u)u = \|u\|^{p}- \sum_{x\in \mathbb{V}} f(x,u)u.
$$
Set
$$\mathcal{N} = \{ u \in E\backslash \{0\} : \Phi'(u)u = 0 \}$$
and$$c:=\inf_{u \in \mathcal{N}} \Phi(u).$$ 
$\mathcal{N}$  is called the Nehari manifold, which contains all nontrivial critical points of $\Phi$. We say that $u$ is a ground state solution if it possesses the least positive energy among all  solutions for $\eqref{eq2}$.
The following is our main result.

 \begin{theorem}\label{thm1}
  Assume that $f$ and $V$ satisfy $(A_{1})$-$(A_{5})$. 
 	Then $c>0$, $c$ is attainable and each $u\in \mathcal{N}$ with $\Phi(u)=c$ is a ground state solution for $\eqref{eq2}$.
 \end{theorem}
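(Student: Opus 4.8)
The plan is to realize $c$ as a minimax value over the Nehari manifold and to recover the missing compactness from the $T$-periodicity of $V$ and $f$, exploiting the embeddings $E \hookrightarrow \ell^{q}(\mathbb{V})$ for every $q \geq p$. These embeddings are a consequence of the equivalence of the three function spaces established above, which on $\mathbb{Z}^{N}$ plays the role of the Sobolev embedding of the continuous setting and is precisely what lets us work with $q>p$ rather than $q>p^{*}$.

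First I would analyze the fibering map $\gamma_{u}(t):=\Phi(tu)$ for fixed $u\in E\setminus\{0\}$, $t>0$. Writing $\gamma_{u}(t)=\frac{t^{p}}{p}\|u\|^{p}-\sum_{x}F(x,tu)$ and using $(A_{1})$ and $(A_{3})$ to bound $\sum_{x}F(x,tu)\le\frac{\varepsilon}{p}t^{p}\|u\|_{p}^{p}+\frac{C_{\varepsilon}}{q}t^{q}\|u\|_{q}^{q}$, one sees $\gamma_{u}(t)>0$ for small $t$, while $(A_{5})$ forces $\gamma_{u}(t)\to-\infty$ as $t\to\infty$. Hence $\gamma_{u}$ attains a positive maximum at some $t_{u}>0$ with $t_{u}u\in\mathcal{N}$. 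The equation $\gamma_{u}'(t)=0$ rewrites as $\|u\|^{p}=\sum_{x}\frac{f(x,tu)(tu)}{t^{p}}$, whose right-hand side is strictly increasing in $t$ on each vertex by $(A_{4})$; this yields uniqueness of $t_{u}$ and the identity $c=\inf_{u\neq0}\max_{t>0}\Phi(tu)$. The same bound gives $\Phi(u)\ge\alpha>0$ whenever $\|u\|=\rho$ is small, and since every $u\in\mathcal{N}$ satisfies $\Phi(u)=\max_{t>0}\Phi(tu)\ge\Phi(\frac{\rho}{\|u\|}u)\ge\alpha$, we get $c>0$; combining the growth bound with the constraint $\|u\|^{p}=\sum_{x}f(x,u)u$ and $E\hookrightarrow\ell^{q}$ also produces a uniform lower bound $\|u\|\ge\delta>0$ on $\mathcal{N}$.

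Next I would take a minimizing sequence $(u_{n})\subset\mathcal{N}$, $\Phi(u_{n})\to c$, and handle boundedness and non-vanishing together. If $\|u_{n}\|\to\infty$, set $v_{n}=u_{n}/\|u_{n}\|$; should $\|v_{n}\|_{\infty}\to0$, then $\|v_{n}\|_{q}\to0$ for $q>p$ by the interpolation $\|v_{n}\|_{q}^{q}\le\|v_{n}\|_{\infty}^{q-p}\|v_{n}\|_{p}^{p}$, and comparing $\Phi(u_{n})\ge\Phi(sv_{n})$ for every fixed $s$ gives $c\ge s^{p}/p$, a contradiction as $s\to\infty$; hence there are $x_{n}$ with $|v_{n}(x_{n})|\ge\delta''$. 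Translating by the $T$-periodicity lattice so that $x_{n}$ lands in a fixed fundamental domain (which preserves both membership in $\mathcal{N}$ and the value of $\Phi$) and passing to a pointwise limit, one finds a vertex where $u_{n}$ blows up, so $(A_{5})$ and Fatou force $\Phi(u_{n})\to-\infty$, again a contradiction. Thus $(u_{n})$ is bounded. The analogous non-vanishing argument on $\mathcal{N}$, now using $\|u_{n}\|\ge\delta$, produces $x_{n}$ with $|u_{n}(x_{n})|\ge\delta''$; after the same translation I obtain $\tilde u_{n}\to u$ pointwise with $u(x_{0})\neq0$, so $u\neq0$. I expect this translation/concentration step to be the main obstacle, since it is exactly where the loss of compactness on $\mathbb{Z}^{N}$ must be defeated.

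Finally, for attainment and criticality two ingredients combine. On one hand, integrating $(A_{4})$ gives $\widehat F(x,s):=\frac{1}{p}f(x,s)s-F(x,s)\ge0$, so that for $u\in\mathcal{N}$ one has $\Phi(u)=\sum_{x}\widehat F(x,u)$; Fatou applied to the pointwise limit then yields $\Phi(u)\le\liminf_{n}\Phi(\tilde u_{n})=c$. On the other hand, since $\mathcal{N}$ is a natural constraint (a consequence of $(A_{4})$, which forces the only Lagrange multiplier to vanish, as $J'(u)u<0$ on $\mathcal{N}$ where $J(u)=\Phi'(u)u$), Ekeland's variational principle provides out of the minimizing sequence a Palais--Smale sequence for the free functional $\Phi$; testing $\Phi'(\tilde u_{n})\to0$ against each $\phi\in C_{c}(\mathbb{V})$ and using pointwise convergence gives $\Phi'(u)\phi=0$, hence $\Phi'(u)=0$ by density, so $u\in\mathcal{N}$ and $\Phi(u)\ge c$. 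Therefore $\Phi(u)=c$ is attained and $u$ solves \eqref{eq2}; that it is a ground state follows because every nontrivial solution lies in $\mathcal{N}$ and thus has energy at least $c$.
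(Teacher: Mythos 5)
Most of your outline coincides with what the paper actually does: the fibering-map analysis of $t\mapsto\Phi(tu)$, the positivity of $c$ via a small sphere, the uniform lower bound $\|u\|\ge\delta$ on $\mathcal{N}$, boundedness of minimizing sequences by the coercivity argument (Proposition \ref{coercive}), the vanishing/non-vanishing dichotomy via the interpolation $\|u\|_{q}\le\|u\|_{p}^{p/q}\|u\|_{\infty}^{(q-p)/q}$, the $T$-periodic translation into the fundamental domain $\Omega=[0,T)^{N}\cap\mathbb{V}$, testing $\Phi'(\tilde u_{n})\to0$ against $C_{c}(\mathbb{V})$, and the Fatou argument based on $\tfrac1p f(x,u)u-F(x,u)\ge0$. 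Those steps are correct and essentially identical to the paper's.

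The genuine gap is the step that produces the Palais--Smale sequence for the free functional. You treat $\mathcal{N}$ as a $C^{1}$ natural constraint and kill the Lagrange multiplier using $J'(u)u<0$, where $J(u)=\Phi'(u)u$. But $J'$ need not exist under the stated hypotheses: $J(u)=\|u\|^{p}-\sum_{x}f(x,u)u$, so differentiating $J$ requires differentiating $f(x,\cdot)$, whereas $(A_{1})$ assumes only continuity of $f(x,\cdot)$ and $(A_{4})$ gives monotonicity of $f(x,u)/|u|^{p-1}$, not differentiability. Consequently $\mathcal{N}=J^{-1}(0)\setminus\{0\}$ is not known to be a $C^{1}$ submanifold of $E$, Ekeland's principle applied on $\mathcal{N}$ cannot be converted into a statement $\Phi'(u_{n})-\lambda_{n}J'(u_{n})\to0$, and the multiplier computation cannot even be written down. (Even granting $f(x,\cdot)\in C^{1}$, strict monotonicity in $(A_{4})$ only yields $J'(u)u\le0$ pointwise, so strictness would need a separate argument.) This is exactly the difficulty the paper's Section~3 is built to circumvent: it shows $\mathcal{N}$ is homeomorphic to the unit sphere $S$ via $m(w)=s_{w}w$ (Proposition \ref{3.2}), that the reduced functional $\hat\Psi=\Phi\circ\hat m$ is $C^{1}$ even though $\mathcal{N}$ itself may fail to be a manifold (Proposition \ref{prop3.4}, Corollary \ref{corollary3.5}), and that Palais--Smale sequences of $\Psi=\hat\Psi|_{S}$ and of $\Phi$ along $\mathcal{N}$ correspond, with $\|\Psi'(w_{n})\|_{S}\le\|u_{n}\|\,\|\Phi'(u_{n})\|\le3\|\Psi'(w_{n})\|_{S}$ (Proposition \ref{3.7}); Ekeland is then applied on the genuine $C^{1}$ manifold $S$, not on $\mathcal{N}$. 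Without this reduction (or some substitute avoiding differentiation of $J$, such as a quantitative deformation argument), your passage from a minimizing sequence on $\mathcal{N}$ to a sequence with $\Phi'(\tilde u_{n})\to0$ is unjustified, and everything downstream of it ($\Phi'(u)=0$, $u\in\mathcal{N}$, hence $\Phi(u)\ge c$) is unsupported.
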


This paper is organized as follows. In Section 2, we introduce some basic properties of function spaces defined on lattice graphs. In Section 3, we construct the framework of the Nehari method, and the proof of Theorem \ref{thm1} is contained in Section 4.

\section{Preliminaries}
\begin{lemma}\label{lem2.1}
 $\| \cdot \|_{p}$, $\| \cdot \|_{W^{1,p}(\mathbb{V})}$ and $\|  \cdot \|$ are equivalent norms and $\ell^{p}(\mathbb{V})= W^{1,p}(\mathbb{V})=E $.
\end{lemma}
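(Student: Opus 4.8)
The plan is to show that the three norms are equivalent on $C_c(\mathbb{V})$ and that the three spaces coincide, with all the work going through the counting measure on $\mathbb{Z}^N$. The key structural fact, peculiar to graphs with the counting measure, is the embedding $\ell^p(\mathbb{V}) \hookrightarrow \ell^q(\mathbb{V})$ for $p \le q$ — the reverse of the inclusion in the continuous setting. Combined with local finiteness and boundedness of vertex degrees, this collapses the Sobolev scale.

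\medskip

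First I would compare $\|\cdot\|_p$ with $\|\cdot\|_{W^{1,p}(\mathbb{V})}$. The inequality $\|u\|_p \le \|u\|_{W^{1,p}(\mathbb{V})}$ is immediate from the definition, since $\|u\|_{W^{1,p}(\mathbb{V})}^p = \mathcal{E}_p(u) + \|u\|_p^p$ and $\mathcal{E}_p(u) \ge 0$. For the reverse, I would bound the Dirichlet energy by the $\ell^p$ norm: using $|\nabla_{xy}u|^p = |u(y)-u(x)|^p \le 2^{p-1}(|u(x)|^p + |u(y)|^p)$ and summing over edges, each vertex $x$ appears in at most $\deg(x) = 2N$ edges on $\mathbb{Z}^N$, so
\begin{displaymath}
	\mathcal{E}_p(u) = \frac{1}{2}\sum_{x \sim y} |\nabla_{xy}u|^p \le 2^{p-1} \cdot 2N \sum_{x \in \mathbb{V}} |u(x)|^p = 2^p N \|u\|_p^p.
\end{displaymath}
This gives $\|u\|_{W^{1,p}(\mathbb{V})}^p \le (1 + 2^p N)\|u\|_p^p$, establishing the equivalence of these two norms. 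Because the constants are uniform, the completions coincide, so $\ell^p(\mathbb{V}) = W^{1,p}(\mathbb{V})$.

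\medskip

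Next I would compare $\|\cdot\|$ with $\|\cdot\|_{W^{1,p}(\mathbb{V})}$. Here assumption $(A_2)$ supplies the two-sided bound $0 < V_1 \le V(x) \le V_2$, so the potential term satisfies $V_1 \|u\|_p^p \le \sum_x V(x)|u(x)|^p \le V_2 \|u\|_p^p$. Replacing the $\ell^p$ term in $\|\cdot\|_{W^{1,p}(\mathbb{V})}^p$ by the weighted term changes each side by at most a factor involving $V_1, V_2$, so $\|\cdot\|$ and $\|\cdot\|_{W^{1,p}(\mathbb{V})}$ are equivalent. Chaining the two equivalences gives that all three norms are equivalent on $C_c(\mathbb{V})$, and since they are mutually equivalent the three completions $\ell^p(\mathbb{V})$, $W^{1,p}(\mathbb{V})$, and $E$ coincide as sets with equivalent norms.

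\medskip

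The main subtlety is not any single inequality — each is elementary — but rather the justification that equality of the completed spaces follows from norm equivalence on the dense subspace $C_c(\mathbb{V})$. I would make this precise by noting that $\ell^p(\mathbb{V})$ is the completion of $C_c(\mathbb{V})$ under $\|\cdot\|_p$ (stated in the excerpt), and then arguing that any Cauchy sequence in one norm is Cauchy in the others, with the same limit computed pointwise; boundedness of $V$ and finiteness of the degree are exactly what make the three limit procedures produce the same function space rather than merely comparable norms on a fixed space. The only care needed is to confirm that an element of $W^{1,p}(\mathbb{V})$ defined via finite norm (not a priori as a limit of $C_c$ functions) actually lies in $\ell^p(\mathbb{V})$, which follows because $\|u\|_p \le \|u\|_{W^{1,p}(\mathbb{V})} < \infty$ directly forces $u \in \ell^p(\mathbb{V})$, and conversely the energy bound above shows every $\ell^p$ function has finite $W^{1,p}$ norm.
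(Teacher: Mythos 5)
Your proof is correct and follows essentially the same route as the paper: bound the Dirichlet energy $\mathcal{E}_p(u)$ by a constant times $\|u\|_p^p$ using bounded vertex degree (you make the constant $2^pN$ explicit where the paper writes $C(p,N)$), and use the two-sided bound $0<V_1\le V(x)\le V_2$ to compare the weighted norm with $\|\cdot\|_{W^{1,p}(\mathbb{V})}$. Your closing remarks on why norm equivalence forces equality of the spaces (finite $W^{1,p}$ norm directly implies $u\in\ell^p(\mathbb{V})$ and conversely) spell out a point the paper leaves implicit, but the argument is the same.
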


\begin{proof}
	On one hand, $0<V_{1} \leq V(x) \leq V_{2}$ for each $x\in \mathbb{V}$ implies 
	$$\nu_{1}\|u\|_{W^{1,p}} \leq \|u\| \leq \nu_{2}\|u\|_{W^{1,p}}, \quad  \forall u \in C_{c}(\mathbb{V}),$$
	where $\nu_{1}= \text{min}\{V_{1}^{\frac{1}{p}},1\}$, $\nu_{2}=\text{max}\{V_{2}^{\frac{1}{p}},1 \}$.
	On the other hand, 
	$$	\mathcal{E}_{p}(u)
	=\sum_{x,y\in \mathbb{V},y \sim x} \frac{1}{2} |u(y)-u(x)|^{p} 
	\leq C(p,N)\|u\|_{p}^{p}. $$     
Hence, $$\|u\|_{p} \leq \|u\|_{W^{1,p}} \leq \big( 1+C(p,N) \big)^{\frac{1}{p}}\|u\|_{p}.$$
This yields that 
$\| \cdot \|_{p} \sim  \| \cdot \|_{W^{1,p}(\mathbb{V})} \sim \|  \cdot \|$. 
\end{proof}
 
 \begin{lemma}\label{reflexive}
 	$E$ is a reflexive Banach space.
 \end{lemma}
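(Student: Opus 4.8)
The plan is to reduce the claim to the well-known reflexivity of the sequence space $\ell^{p}(\mathbb{V})$ for $1<p<\infty$, using Lemma \ref{lem2.1}. First I would record that, by Lemma \ref{lem2.1}, we have the set-theoretic equality $E=\ell^{p}(\mathbb{V})$ and that the norms $\|\cdot\|$ and $\|\cdot\|_{p}$ are equivalent; in particular the identity map $\mathrm{id}\colon (E,\|\cdot\|)\to(\ell^{p}(\mathbb{V}),\|\cdot\|_{p})$ is a bounded linear bijection with bounded inverse, i.e. an isomorphism of Banach spaces. Since completeness is preserved under passage to an equivalent norm and $\ell^{p}(\mathbb{V})$ is complete, it follows immediately that $(E,\|\cdot\|)$ is a Banach space.

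Next I would invoke the standard fact that $\ell^{p}(\mathbb{V})$ is reflexive for $1<p<\infty$. Concretely, the dual of $\ell^{p}(\mathbb{V})$ is isometrically isomorphic to $\ell^{p'}(\mathbb{V})$, where $\tfrac{1}{p}+\tfrac{1}{p'}=1$, under the natural pairing $\langle u,v\rangle=\sum_{x\in\mathbb{V}} u(x)v(x)$. Iterating this identification shows that the canonical embedding of $\ell^{p}(\mathbb{V})$ into its bidual is onto, whence $\ell^{p}(\mathbb{V})$ is reflexive.

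Finally, I would use that reflexivity is an isomorphism invariant: if two Banach spaces are isomorphic as topological vector spaces and one of them is reflexive, then so is the other, since a topological isomorphism induces compatible isomorphisms on the duals and biduals that intertwine the canonical embeddings into the bidual. Applying this to the isomorphism from the first step transfers reflexivity from $\ell^{p}(\mathbb{V})$ to $E$, which completes the argument.

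As for difficulty, there is essentially no hard step once Lemma \ref{lem2.1} is available; the only point requiring a little care is the transfer of reflexivity through an equivalent norm, which is the elementary observation just noted. An alternative self-contained route would be to verify directly that $(E,\|\cdot\|)$ is uniformly convex---exploiting the uniform convexity of the underlying $\ell^{p}$ structure for $1<p<\infty$ (e.g. via the Clarkson inequalities)---and then appeal to the Milman--Pettis theorem. I would nonetheless prefer the shorter route through Lemma \ref{lem2.1}, since it avoids reproving uniform convexity of the combined gradient-plus-potential norm.
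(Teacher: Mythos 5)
Your proposal is correct and follows essentially the same route as the paper: both reduce the claim to the reflexivity of $\ell^{p}(\mathbb{V})$ for $1<p<\infty$ and transfer it to $E$ via the norm equivalence of Lemma \ref{lem2.1}, using that reflexivity is invariant under equivalent norms. The only cosmetic difference is how the standard fact is cited --- the paper invokes uniform convexity (Milman--Pettis) while you use the duality $(\ell^{p})^{*}\cong\ell^{p'}$ --- and your alternative suggestion of proving uniform convexity of $\|\cdot\|$ directly is precisely the detour both you and the paper rightly avoid.
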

\begin{proof}
   Since $\ell^{p}(\mathbb{V})$ is a uniformly convex Banach space, it is reflexive.
   The reflexive space remains reflexive for an equivalent norm, so both $W^{1,p}(V)$ and $E$ are reflexive as well.
\end{proof}

\begin{lemma}[Integration by parts of $p$-Laplacian]
	For any $u \in E, v\in C_{c}(V),$
	$$
\frac{1}{2}	\sum_{x,y\in\mathbb{V}, x\sim y} |\nabla_{xy} u|^{p-2}\nabla_{xy}u\nabla_{xy}v = \sum_{x\in \mathbb{V}} (-\Delta_{p}u)v.
	$$
\end{lemma}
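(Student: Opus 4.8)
The plan is to establish the identity by a discrete summation-by-parts computation that exploits the antisymmetry of $\nabla_{xy}u$ in the ordered pair $(x,y)$. The first observation is that no convergence issues arise: since $v\in C_c(\mathbb{V})$ has finite support and $\mathbb{Z}^N$ is locally finite, the factor $\nabla_{xy}v=v(y)-v(x)$ vanishes unless at least one of $x,y$ lies in $\operatorname{supp} v$, so only finitely many edges contribute to the left-hand side; likewise the right-hand side is a finite sum over $\operatorname{supp} v$. Moreover each summand is well defined, because $u\in E=\ell^{p}(\mathbb{V})\hookrightarrow \ell^{\infty}(\mathbb{V})$ by Lemma \ref{lem2.1}, so $u$ is bounded and $|\nabla_{xy}u|^{p-2}\nabla_{xy}u$ is finite on every edge. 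Both sides are therefore finite sums, and any reordering carried out below is legitimate.

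Next I would split $\nabla_{xy}v = v(y)-v(x)$ and write the left-hand side as
$$\frac{1}{2}\sum_{x,y\in\mathbb{V},\,x\sim y} |\nabla_{xy}u|^{p-2}(\nabla_{xy}u)\,v(y) - \frac{1}{2}\sum_{x,y\in\mathbb{V},\,x\sim y} |\nabla_{xy}u|^{p-2}(\nabla_{xy}u)\,v(x).$$
In the first sum I relabel $x\leftrightarrow y$. Since $|\nabla_{yx}u|^{p-2}=|\nabla_{xy}u|^{p-2}$ is symmetric while $\nabla_{yx}u=-\nabla_{xy}u$ is antisymmetric, this first sum equals $-\tfrac{1}{2}\sum_{x\sim y}|\nabla_{xy}u|^{p-2}(\nabla_{xy}u)\,v(x)$, that is, it coincides with the second sum up to sign. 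The two terms therefore reinforce each other, the factor $\tfrac{1}{2}$ cancels, and the left-hand side reduces to
$$-\sum_{x,y\in\mathbb{V},\,x\sim y} |\nabla_{xy}u|^{p-2}(\nabla_{xy}u)\,v(x).$$

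Finally I would fix $x$ and carry out the inner summation over its neighbours $y\sim x$: by the very definition of the discrete $p$-Laplacian,
$$\sum_{y\sim x} |u(y)-u(x)|^{p-2}\big(u(y)-u(x)\big) = \Delta_{p} u(x),$$
so the displayed expression equals $-\sum_{x\in\mathbb{V}}(\Delta_{p} u(x))\,v(x) = \sum_{x\in\mathbb{V}} (-\Delta_{p} u(x))\,v(x)$, which is exactly the right-hand side. I do not expect a genuine obstacle here: the finite support of $v$ collapses the statement to an identity between finite sums, so the only things to watch are the ordered-versus-unordered pair bookkeeping (handled by the relabeling) and the sign produced by the antisymmetry of $\nabla_{xy}u$.
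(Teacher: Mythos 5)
Your proof is correct and is exactly the argument the paper has in mind: the paper gives no details, deferring to the proof of Green's formula in its cited reference, and that standard proof is precisely your computation — split $\nabla_{xy}v$ into $v(y)-v(x)$, swap the roles of $x$ and $y$ using the symmetry of $|\nabla_{xy}u|^{p-2}$ and the antisymmetry of $\nabla_{xy}u$, and regroup the resulting sum over neighbours into $\Delta_p u(x)$. Your explicit attention to finiteness (finite support of $v$ plus local finiteness of $\mathbb{Z}^N$) is a detail the paper leaves implicit, and it is handled correctly.
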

	The proof is similar to  that of Green's formula in \cite{MR3822363}.
Based on this lemma, we are ready to define the weak solution of \eqref{eq2}.
\begin{definition}

Let $u\in E$. We say that $u$ is a weak solution of \eqref{eq2} if
   $$
	\frac{1}{2}\sum_{x,y\in\mathbb{V},x\sim y} |\nabla_{xy} u|^{p-2}\nabla_{xy} u \nabla_{xy} v + \sum_{ x\in \mathbb{V}}V(x)|u|^{p-2}uv
	= \sum_{x\in\mathbb{V}} f(x,u)v, \quad \forall v\in E.
	$$
\end{definition}
For any given $y \in \mathbb{V}$, by using integration by parts and  taking  the test function 
$v(x) = \delta_{y}(x)$, we obtain the following lemma.
\begin{lemma}
	If $u$ is a weak solution of (\ref{eq2}),  then $u$ is a pointwise solution.  
\end{lemma}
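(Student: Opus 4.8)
The plan is to use the single-vertex indicator functions as test functions in the weak formulation. For a fixed $y \in \mathbb{V}$, let $\delta_y \in C(\mathbb{V})$ be defined by $\delta_y(x) = 1$ if $x = y$ and $\delta_y(x) = 0$ otherwise. Since $\delta_y$ has finite support, it lies in $C_c(\mathbb{V})$, and by Lemma~\ref{lem2.1} we have $C_c(\mathbb{V}) \subset \ell^p(\mathbb{V}) = E$, so $\delta_y$ is an admissible test function. I would substitute $v = \delta_y$ into the defining identity of a weak solution.

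First I would treat the gradient term. By the integration-by-parts lemma for the $p$-Laplacian (valid since $u \in E$ and $\delta_y \in C_c(\mathbb{V})$), the first sum rewrites as
$$
\frac{1}{2}\sum_{x,z\in\mathbb{V},\, x\sim z} |\nabla_{xz} u|^{p-2}\nabla_{xz} u\, \nabla_{xz}\delta_y
= \sum_{x\in\mathbb{V}} (-\Delta_p u)(x)\,\delta_y(x) = (-\Delta_p u)(y),
$$
where the last equality holds because $\delta_y$ collapses the sum to the single vertex $y$. The same collapse applies to the remaining two sums: the potential term becomes $V(y)|u(y)|^{p-2}u(y)$, and the right-hand side becomes $f(y,u(y))$.

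Assembling these three evaluations, the weak formulation at the test function $\delta_y$ reduces to
$$
-\Delta_p u(y) + V(y)|u(y)|^{p-2}u(y) = f(y,u(y)).
$$
Since $y \in \mathbb{V}$ was arbitrary, this identity holds at every vertex, which is precisely the statement that $u$ solves \eqref{eq2} pointwise.

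There is no substantial obstacle here; the only points requiring care are verifying that $\delta_y$ is a legitimate test function in $E$ (handled by Lemma~\ref{lem2.1}) and confirming that the integration-by-parts lemma applies with the regularity $u \in E$, $\delta_y \in C_c(\mathbb{V})$ that it requires. Everything else is a finite, termwise evaluation of sums against an indicator, so convergence is automatic.
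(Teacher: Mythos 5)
Your proof is correct and follows exactly the paper's approach: the authors likewise obtain the lemma by taking the test function $v = \delta_y$ for an arbitrary vertex $y$ and applying the integration-by-parts lemma to collapse the weak formulation to the pointwise equation at $y$. Your write-up simply fills in the termwise evaluations that the paper leaves implicit.
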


\begin{lemma}\label{embedding}
	Let $\mathbb{Z}^{N} = (\mathbb{V}, \mathbb{E})$ be a lattice graph. Then,\\
	$(i)$ $\ell^{p}(\mathbb{V})$ is embedded into $\ell^{q}(\mathbb{V})$ for any $1 \leq p < q \leq \infty$.\\
   $(ii)$ For any bounded sequence $\{u_{n}\}_{n} \subset E$, there exists $u\in E$ such that, up to a subsequence,
	\begin{equation}\notag
		\left\{ 	\begin{aligned}
			& u_{n} \rightharpoonup u \quad \quad \quad \quad \text{in} \,\, E,\\
			& u_{n}(x) \to u(x) \quad \forall x\in \mathbb{V}.
		\end{aligned}
		\right.
	\end{equation}
\end{lemma}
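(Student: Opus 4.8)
The plan is to handle the two parts separately, using the norm equivalence of Lemma~\ref{lem2.1} and the reflexivity of Lemma~\ref{reflexive} as the main inputs.

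For part $(i)$ I would establish the pointwise-to-summation inequality $\|u\|_q \leq \|u\|_p$ for all $1 \leq p < q \leq \infty$, which immediately gives the embedding. The case $q = \infty$ is trivial, since $|u(x)|^p \leq \sum_{y\in\mathbb{V}} |u(y)|^p$ for each $x$ forces $\|u\|_\infty \leq \|u\|_p$. For finite $q$ I would normalize by homogeneity and assume $\|u\|_p = 1$; then $|u(x)| \leq 1$ for every $x$, so $|u(x)|^q \leq |u(x)|^p$ because $q > p$, and summing over $\mathbb{V}$ gives $\|u\|_q^q \leq \|u\|_p^p = 1$. This is the discrete analogue of the Sobolev embedding, with the inclusion running in the opposite direction from the continuous setting, which is precisely what allows the removal of the restriction $q > p^{*}$ noted in the abstract.

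For part $(ii)$, the weak convergence is a direct consequence of reflexivity: since $\{u_n\}$ is bounded in $E$ and $E$ is reflexive by Lemma~\ref{reflexive}, a standard result guarantees a subsequence and some $u \in E$ with $u_n \rightharpoonup u$ in $E$. To pass to pointwise convergence, I would show that for each fixed $x \in \mathbb{V}$ the evaluation map $v \mapsto v(x)$ is a bounded linear functional on $E$. Indeed, combining part $(i)$ (with $q = \infty$) and Lemma~\ref{lem2.1} yields $|v(x)| \leq \|v\|_\infty \leq \|v\|_p \leq C\|v\|$ for some constant $C > 0$ independent of $v$. Since weak convergence preserves the values of bounded linear functionals, $u_n \rightharpoonup u$ gives $u_n(x) \to u(x)$ for every $x$. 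The only step requiring genuine care is the continuity of point evaluation, and this is exactly where the discrete embedding of part $(i)$ enters in an essential way; there is otherwise no substantial obstacle.
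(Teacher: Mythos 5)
Your proposal is correct and follows essentially the same route as the paper: weak convergence of a subsequence from reflexivity (Lemma~\ref{reflexive}), then pointwise convergence by applying the weak convergence to the point-evaluation functional $\delta_{x_0}$. The only cosmetic differences are that the paper realizes $\delta_{x_0}$ as an element of $\ell^{p'}=(\ell^{p})^{*}$ after transferring the weak convergence to $\ell^{p}$, whereas you bound evaluation directly on $E$ via $|v(x)|\leq\|v\|_{\infty}\leq\|v\|_{p}\leq C\|v\|$, and that you supply a proof of part $(i)$, which the paper dismisses as well-known.
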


\begin{proof}
	(i) is well-known. \\
	(ii) By Lemma \ref{reflexive}, $E$ is a reflexive Banach space. Since $\{u_{n}\}_{n}$ is bounded in $E$, we have $u_{n}\rightharpoonup u$ in $E$ after passing to a subsequence. Moreover, $\{ u_{n}\}_{n}$ is also bounded in $\ell^{p}(\mathbb{V})$, $ 1< p < \infty$, and therefore, $u_{n} \rightharpoonup u$ in $\ell^{p}(\mathbb{V})$. 
	It follows that,
	$$\lim_{n\to \infty} \sum_{x\in\mathbb{V}} u_{n}\phi  = \sum_{x\in\mathbb{V}} u\phi , \quad \forall \phi \in \ell^{p'}(\mathbb{V}), \,p' := \tfrac{p}{p-1}.
	$$
	For  each vertex $x_{0} \in \mathbb{V}$, taking $\phi = \delta_{x_{0}}$, we have 
	$\lim_{n} u_{n}(x_{0}) = u(x_{0})$.
\end{proof}
At the end of this section, we give two lemmas which will be used in the subsequent proofs.
\begin{lemma}\label{epsilon}
  Suppose $f$ satisfies $(A_{1})$ and $(A_{3})$. Then, for any $\epsilon>0$, there exists $C_{\epsilon}>0$ such that 
   $$
   |f(x,u)| \leq \epsilon|u|^{p-1} + C_{\epsilon} |u|^{q-1}, \quad \forall u\in \mathbb{R}, x\in \mathbb{V}.
   $$
\end{lemma}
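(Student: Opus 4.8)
The plan is to split $\mathbb{R}$ into a neighbourhood of the origin and its complement, controlling $f$ by $(A_{3})$ on the former and by $(A_{1})$ on the latter. First I would fix $\epsilon > 0$ and use $(A_{3})$: since $|f(x,u)|/|u|^{p-1} \to 0$ as $u \to 0$ uniformly in $x$, there exists $\delta = \delta(\epsilon) > 0$ with
$$|f(x,u)| \leq \epsilon |u|^{p-1}, \qquad \text{whenever } 0 < |u| \leq \delta, \ \forall x \in \mathbb{V}.$$
At $u = 0$ the desired inequality is trivial, since $(A_{3})$ together with the continuity in $(A_{1})$ forces $f(x,0) = 0$. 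Thus on the whole range $|u| \leq \delta$ the claimed bound holds immediately, because the extra term $C_{\epsilon}|u|^{q-1}$ is nonnegative.

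For $|u| > \delta$ I would invoke the growth condition $(A_{1})$, namely $|f(x,u)| \leq a(1 + |u|^{q-1})$, and absorb the constant $1$ into the power $|u|^{q-1}$. Since $|u| > \delta$ gives $1 < \delta^{-(q-1)}|u|^{q-1}$, one obtains
$$|f(x,u)| \leq a\big(1 + |u|^{q-1}\big) \leq a\big(\delta^{-(q-1)} + 1\big)|u|^{q-1} =: C_{\epsilon}|u|^{q-1},$$
where $C_{\epsilon} := a(\delta^{-(q-1)} + 1)$ depends on $\epsilon$ only through $\delta$. Adding the nonnegative quantity $\epsilon|u|^{p-1}$ then yields the claim on this second range.

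Combining the two regimes gives $|f(x,u)| \leq \epsilon|u|^{p-1} + C_{\epsilon}|u|^{q-1}$ for every $u \in \mathbb{R}$ and every $x \in \mathbb{V}$, which is exactly the assertion. There is no real obstacle here; the only point that deserves care is the uniform-in-$x$ quantifier in $(A_{3})$, which is precisely what lets $\delta$, and hence $C_{\epsilon}$, be chosen independently of $x$, so that the resulting constant is uniform over the entire vertex set. Note also that only $q > 1$ is needed for this argument; the stronger hypothesis $q > p$ from $(A_{1})$ is not used at this stage.
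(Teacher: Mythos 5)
Your proof is correct and follows essentially the same route as the paper's: use $(A_{3})$ to get the $\epsilon|u|^{p-1}$ bound for $|u| \leq \delta$, then use $(A_{1})$ for $|u| > \delta$ and absorb the constant $a$ into the $|u|^{q-1}$ term via $1 \leq \delta^{-(q-1)}|u|^{q-1}$, yielding the same constant $C_{\epsilon} = a(\delta^{-(q-1)}+1)$. Your extra remarks (handling $u=0$ via $f(x,0)=0$, and noting that only $q>1$ is needed here) are correct refinements but do not change the argument.
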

  
 \begin{proof}
 	By $(A_{3})$, for any given $\epsilon >0$, there exists $\delta >0$ such that, $\forall |u|<\delta$,
 	\begin{displaymath}
 		|f(x,u)| \leq \epsilon |u|^{p-1}, \quad \forall  x\in \mathbb{V}.
 	\end{displaymath}
 	When $|u| \geq \delta$, choose  $C_{\delta} \geq \frac{a}{\delta^{q-1}}$. 
 	Substitute this into $(A_{1})$, we have 
    \begin{displaymath}
    	|f(x,u)| \leq a(1+|u|^{q-1}) \leq (C_{\delta}+a)|u|^{q-1} =: C_{\epsilon} |u|^{q-1}.
    \end{displaymath}
 \end{proof}

\begin{lemma}\label{f&F}
	 Suppose $f$ satisfies $(A_{3})$-$(A_{4})$. Then, 
	 $$0 < F(x,u) < \frac{1}{p} f(x,u)u, \quad \forall u \in \mathbb{R}\backslash \{0\}.$$
\end{lemma}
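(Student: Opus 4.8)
The plan is to rewrite $f$ in terms of the quotient appearing in $(A_4)$. For a fixed $x$ and $s \neq 0$, set $g(s) := f(x,s)/|s|^{p-1}$, so that $f(x,s) = g(s)|s|^{p-1}$. Assumption $(A_4)$ says precisely that $g$ is strictly increasing on each of $(0,\infty)$ and $(-\infty,0)$, while $(A_3)$ gives $g(s)\to 0$ as $s\to 0$ from either side. Combining strict monotonicity with this boundary limit immediately yields the sign information $g(s)>0$ for $s>0$ and $g(s)<0$ for $s<0$; in particular $f(x,s)$ has the same sign as $s$. This sign information is what drives both the positivity $F>0$ and the correct direction of the comparison.

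First I would treat the case $u>0$. Here $F(x,u)=\int_0^u g(s)s^{p-1}\,ds$ with a strictly positive integrand, so $F(x,u)>0$. For the upper bound I use $g(s)<g(u)$ for every $0<s<u$ by strict monotonicity; multiplying by the positive weight $s^{p-1}$ and integrating gives $\int_0^u g(s)s^{p-1}\,ds < g(u)\int_0^u s^{p-1}\,ds = \tfrac{1}{p}g(u)u^p = \tfrac{1}{p}f(x,u)u$. Strictness of the integral inequality holds because $g(s)<g(u)$ is valid throughout the open interval $(0,u)$ and the integrands are continuous.

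The case $u<0$ is handled in the same way after a sign-bookkeeping step. Writing $F(x,u)=\int_0^u g(s)(-s)^{p-1}\,ds$ (since $|s|=-s$ on the range of integration) and substituting $s=-t$ turns the claim into a comparison over $(0,-u)$. Now $g(-t)<0$ on this range gives $F(x,u)>0$, and the comparison $g(-t)>g(u)$ for $0<t<-u$, again from strict monotonicity of $g$ on the negative axis, produces the strict bound $F(x,u)<\tfrac{1}{p}f(x,u)u$ once one multiplies by $-1$ and reverses the inequality.

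The only genuinely delicate point is the sign accounting in the negative case: because the quotient in $(A_4)$ uses $|s|^{p-1}$ rather than a signed power, one must track carefully that $g<0$ on $(-\infty,0)$ and that the direction of the inequality $g(s)<g(u)$ is reversed under the substitution $s\mapsto -s$. Everything else reduces to the elementary estimate that integrating a strictly monotone integrand against a positive weight yields a strict inequality.
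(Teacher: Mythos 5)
Your proof is correct and follows essentially the same route as the paper's: both use $(A_3)$ and $(A_4)$ to get the sign of the quotient $f(x,s)/|s|^{p-1}$, then bound $F(x,u)=\int_0^u \frac{f(x,s)}{|s|^{p-1}}\,|s|^{p-1}\,ds$ by replacing the strictly increasing quotient with its value at $s=u$ and integrating the weight $|s|^{p-1}$. The only difference is that you carry out the $u<0$ case explicitly (with correct sign bookkeeping), whereas the paper dismisses it as ``similar.''
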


\begin{proof}
	Combining $(A_{3})$ with $(A_{4})$, we have $\frac{f(x,u)}{|u|^{p-1}}<0$ if $u<0$ and $\frac{f(x,u)}{|u|^{p-1}}>0$ if $u>0$, which gives $f(x,u)u>0$, $\forall u \neq 0$.
	 Hence, $F(x,u)=\int_{0}^{u} f(x,t) dt >0$.
	 If $u>0$, by $(A_{4})$,
	 $$	F(x,u) =\int_{0}^{u} \frac{f(x,t)}{t^{p-1}}t^{p-1} dt 
	 < \frac{f(x,u)}{u^{p-1}}\int_{0}^{u} t^{p-1} dt
	 = \frac{1}{p}f(x,u)u. $$
	 If $u<0$, the proof is similar.
\end{proof}

\section{Existence of ground state solutions}
In this section, we always assume that $(A_{1})$-$(A_{5})$ are satisfied. We prove the existence of ground state solutions for the equation \eqref{eq2} by the Nehari method. Recall that Nehari manifold is defined as 
$$ \mathcal{N} = \{ u \in E\backslash \{0\} : \Phi'(u)u =0  \},$$ 
which contains all nontrivial critical points of $\Phi$.  Denote the unit sphere in $E$ by $$S=\{ u\in E : \|u\|=1 \}.$$
In the following, we first show that $\mathcal{N}$ is homeomorphic to $S$. And then, the problem of looking for critical points of $\Phi$ on $\mathcal{N}$ is reduced to seeking critical points of a new functional $\Psi$ on $S$.

\begin{proposition}	\label{3.1 }
	Set $\alpha_{w}(s) = \Phi(sw)$  for $w\in E\backslash\{0\}$, $0<s<\infty$. Then we have the following.\\
	$(i)$ For each $w\in E\backslash\{0\}$, $\alpha_{w}$ admits a unique maximum point $s_{w}\in(0,\infty)$. \\
	$(ii)$ There exists $\delta >0$ such that $s_{w} \geq \delta$ for all $w$ in $S$.\\
	$(iii)$ For each compact set $K\subset S$, there exists $C_{K}>0$ such that $s_{w} \leq C_{K}$ for all $w$ in $K$.
\end{proposition}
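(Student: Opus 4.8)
The plan is to exploit the $p$-homogeneity of the norm, writing $\alpha_w(s) = \tfrac{s^p}{p}\|w\|^p - \sum_x F(x,sw)$ and working mainly through its derivative. A direct computation gives $\alpha_w'(s) = s^{p-1}\|w\|^p - \sum_x f(x,sw)w = s^{p-1}\big(\|w\|^p - \rho_w(s)\big)$, where I set $\rho_w(s) := s^{1-p}\sum_x f(x,sw)w = \sum_{w(x)\neq 0}\frac{f(x,sw(x))\,sw(x)}{|sw(x)|^{p}}|w(x)|^{p}$. Termwise differentiation of the series $\sum_x F(x,sw)$ is justified by the growth bound of Lemma \ref{epsilon} together with the $\ell^p$--$\ell^q$ embedding of Lemma \ref{embedding}, which make the relevant series of derivatives locally uniformly convergent in $s$. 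I also record that $s\,\alpha_w'(s) = \Phi'(sw)(sw)$, so the positive critical points of $\alpha_w$ are precisely the parameters $s$ with $sw\in\mathcal{N}$.

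For $(i)$, the key is that $\rho_w$ is strictly increasing with range $(0,\infty)$. Fixing $x$ with $w(x)\neq 0$, I rewrite the $x$-summand as $\pm\frac{f(x,sw(x))}{|sw(x)|^{p-1}}$ according to the sign of $w(x)$; the strict monotonicity of $u\mapsto f(x,u)/|u|^{p-1}$ from $(A_4)$ on each half-line shows that in both sign cases this map is strictly increasing in $s>0$, and since $\rho_w$ is a sum of such maps weighted by $|w(x)|^p>0$ it is strictly increasing. Assumption $(A_3)$ makes each summand vanish as $s\to 0^+$, so $\rho_w(s)\to 0$, while Lemma \ref{f&F} gives $\frac{f(x,u)u}{|u|^p}>p\frac{F(x,u)}{|u|^p}$, so a single term together with $(A_5)$ forces $\rho_w(s)\to\infty$ as $s\to\infty$. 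Hence $\rho_w(s)=\|w\|^p$ has a unique root $s_w$, with $\alpha_w'>0$ on $(0,s_w)$ and $\alpha_w'<0$ on $(s_w,\infty)$; this identifies $s_w$ as the unique global maximum point.

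For $(ii)$, let $w\in S$, so $\|w\|=1$ and $s_w^p = \sum_x f(x,s_ww)(s_ww)$ since $s_w w\in\mathcal{N}$. Lemma \ref{epsilon} yields $f(x,u)u\le \epsilon|u|^p + C_\epsilon|u|^q$, and after summing and applying the norm equivalences of Lemma \ref{lem2.1} and the embedding of Lemma \ref{embedding} I obtain $s_w^p \le \epsilon C_1 s_w^p + C_\epsilon C_2 s_w^q$; choosing $\epsilon$ with $\epsilon C_1\le \tfrac12$ and dividing by $s_w^p$ gives $s_w^{q-p}\ge (2C_\epsilon C_2)^{-1}$, so $\delta := (2C_\epsilon C_2)^{-1/(q-p)}$ works uniformly on $S$. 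For $(iii)$ I argue by contradiction: if no bound held there would be $w_n\in K$ with $s_{w_n}\to\infty$, and by compactness $w_n\to w$ in $E$ with $\|w\|=1$, so some $w(x_0)\neq 0$ and $|w_n(x_0)|\ge\tfrac12|w(x_0)|$ for large $n$. Keeping only the nonnegative term at $x_0$ gives $\Phi(s_{w_n}w_n)\le \tfrac1p s_{w_n}^p - F(x_0,s_{w_n}w_n(x_0))$, and since $|s_{w_n}w_n(x_0)|\to\infty$, $(A_5)$ forces $\Phi(s_{w_n}w_n)\to-\infty$; but $s_{w_n}$ is a maximizer, so $\Phi(s_{w_n}w_n)\ge \Phi(w_n)\to\Phi(w)$ by continuity of $\Phi\in C^1(E,\mathbb{R})$, a finite number, a contradiction.

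I expect the main obstacle to be the uniqueness in $(i)$: converting $(A_4)$ into the strict monotonicity of $\rho_w$ uniformly across both signs of $w(x)$, and carefully justifying the termwise differentiation of $\sum_x F(x,sw)$ on the infinite lattice. Once $\rho_w$ is controlled, the remaining estimates in $(ii)$ and $(iii)$ are routine consequences of Lemmas \ref{lem2.1}, \ref{embedding}, \ref{epsilon}, and \ref{f&F}.
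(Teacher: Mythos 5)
Your proof is correct and follows essentially the same route as the paper: the same factorization $\alpha_w'(s)=s^{p-1}\bigl(\|w\|^p-\rho_w(s)\bigr)$ (your $\rho_w$ is the paper's $\theta_w$ up to sign), with $(A_4)$ giving strict monotonicity, $(A_3)$ handling $s\to 0^+$, and Lemma \ref{f&F} together with $(A_5)$ handling $s\to\infty$. The only differences are cosmetic: in $(ii)$ you solve the Nehari identity at $s_w$ for a lower bound rather than showing $\theta_w>0$ on $(0,\delta)$ uniformly on $S$, and in $(iii)$ you contradict the maximality bound $\Phi(s_{w_n}w_n)\ge\Phi(w_n)$ directly rather than the sign of $\theta_{w_n}$ at $s_{w_n}$; these are the paper's estimates in slightly different packaging.
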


\begin{proof}
	$(i)$ Let $w\in E\backslash \{0\}$. By calculation,
		\begin{displaymath}
		\begin{aligned}	
			\alpha_{w}'(s) &= \Phi'(sw)w = s^{p-1}\|w\|^{p} - \sum_{x\in \mathbb{V}} f(x,sw(x))w(x)\\
			&=s^{p-1} \Big( \|w\|^{p}- \sum_{x\in\mathbb{V}} \frac{|f(x,sw)|}{|sw|^{p-1}}|w|^{p}  \Big)\\
			&=:s^{p-1}\theta_{w}(s).
		\end{aligned}	
	\end{displaymath}
 According to $(A_{4})$, $\theta_{w}(s)$ is strictly decreasing on $(0,\infty)$.  Moreover, we know from $(A_{3})$ that  $\theta_{w}(s)>0$ for sufficiently small $s>0$. Lemma \ref*{f&F} and $(A_{5})$ implies
$$  \theta_{w}(s)<\|w\|^{p} - p\sum_{x\in \mathbb{V}} \frac{F(x,sw)}{|sw|^{p}} |w|^{p} 
 \to-\infty \quad  \text{ as } s \to +\infty.$$
Therefore, there exists unique $s_{w}\in (0,\infty)$ such that 
\begin{equation}\label{zero}
      \alpha_{w}'(s_{w}) 	= \Phi'(s_{w}w)w = 0
\end{equation}
and $\alpha_{w}(s)$ increases on $(0,s_{w})$ and decreases on $(s_{w},\infty)$.\\
$(ii)$ Let $w \in S$ and
$$\theta_{w}(s) = 1 - \frac{1}{s^{p-1}} \sum_{x\in \mathbb{V}} |f(x,sw)||w|. $$
By Lemma \ref{epsilon}, for any $\epsilon>0$, there exists $C_{\epsilon}>0$ such that 
\begin{displaymath}
	\begin{aligned}
	   \theta_{w}(s) 
	   &\geq  1 - \frac{1}{s^{p-1}} \sum_{x\in \mathbb{V}} \Big( \epsilon|sw|^{p-1} +C_{\epsilon}|sw|^{q-1} \Big)|w|  \\
	   	   &=  1 - \sum_{x\in \mathbb{V}} \Big( \epsilon|w|^{p} +C_{\epsilon}s^{q-p}|w|^{q} \Big) \\
	   	   &=1-\big( \epsilon\|w\|_{p}^{p} + C_{\epsilon} s^{q-p}\|w\|_{q}^{q}  \big).
	\end{aligned}
\end{displaymath}
One easily sees that 
$\|w\|_{q} \leq \|w\|_{\infty}^{\frac{q-p}{q}}\|w\|_{p}^{\frac{p}{q}} \leq \|u\|_{p}$. By Lemma \ref{lem2.1}, $\|w\|_{p} \leq V_{1}^{-\frac{1}{p}}$ as $\|w\|=1$. 
It follows that 
 $$\theta_{w}(s) \geq \big(1- \epsilon V_{1}^{-1} \big) - C_{\epsilon} s^{q-p} V_{1}^{-\frac{q}{p}}.$$
Take $\epsilon =\epsilon_{_0} \leq \frac{V_{1}}{2}$ and $\delta > 0$ small enough such that
$$\theta_{w}(s) \geq \frac{1}{2} - C_{\epsilon_{_0}}s^{q-p}V_{1}^{-\frac{q}{p}} >0, \quad \forall s\in (0,\delta).$$
Therefore,
 $$\alpha_{w}'(s) = s^{p-1}\theta_{w}(s) >0,\quad \forall  s\in (0,\delta),$$ 
 and then $s_{w} \geq \delta >0$.\\
$(iii)$
Since $\alpha_{w}'(s)<0$ on $(s_{w},\infty)$ and $\alpha'_{w}(s)>0$ on $(0,s_{w})$, it suffices to show the existence of $C_{K}$ which guarantees
$$ \alpha_{w}'(s) \leq 0  \text{ on } [C_{K}, \infty),\quad  \forall w\in K.$$
We show this by contradiction. Suppose $\{w_{n}\}\subset K $ and $s_{n}:=s_{w_{n}} \to \infty$ as $n \to \infty$. Note that
\begin{equation}\label{tri}
	\begin{aligned}
		0<\theta_{w_{n}}(s_{n}) 
	= &1 - \sum_{x\in \mathbb{V}} \frac{f(x,s_{n}w_{n})w_{n}}{s_{n}^{p-1}}  \\
	\leq &1-p\sum_{x\in \mathbb{V}} \frac{F(x,s_{n}w_{n})}{s_{n}^{p}}. 
	\end{aligned}
\end{equation}
Since $K$ is compact, we may assume that $w_{n} \to w\in S$ and  $w_{n}(x) \to w(x)$ as $n\to \infty$ for each vertex $x$. 
For $ x_{0}\in \mathbb{V}$ 
with $w_{n}(x_{0}) \to w(x_{0})\neq 0$, by $(A_{5})$,
\begin{equation}\notag
	\begin{aligned}
	\sum_{x \in \mathbb{V}} \frac{F(x,s_{n}w_{n})}{s_{n}^{p}}
	&= \sum_{x\in \mathbb{V}} \frac{F(x,s_{n}w_{n})}{|s_{n}w_{n}|^{p}}|w_{n}(x)|^{p} \\
	&\geq \frac{F(x_{0},s_{n}w_{n}(x_{0}))}{|s_{n}w_{n}(x_{0})|^{p}}|w_{n}(x_{0})|^{p}
	\to +\infty \quad \text{ as } n\to \infty.
	\end{aligned}
\end{equation}
This yields a contraction to $(\ref{tri})$. 
\end{proof}
\begin{remark}\label{rm3.2}
   In fact, \eqref{zero} implies $ s_{w}w \in \mathcal{N}$. One can see from Lemma \ref{3.1 }(i) that, for each $w\in E\backslash \{0\}$, the ray $s\mapsto sw$ intersects with $\mathcal{N}$ at precisely one point $s_{w}w$.
\end{remark}

\begin{proposition}\label{3.2}
	Set  $\hat{m}: E\backslash\{0\} \to \mathcal{N}$,\, $\hat{m}(w):=s_{w}w$ and $m=\hat{m}|_{S}$. Then $\hat{m}$ is continuous and $m: S \to \mathcal{N}$ is a homeomorphism.
\end{proposition}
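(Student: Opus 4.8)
The plan is to exploit the scaling structure of the ray map together with the three estimates from Proposition \ref{3.1 }. First I would record the normalization identity $\hat{m}(w) = m(w/\|w\|)$ for every $w \in E\backslash\{0\}$: the ray $\{sw : s>0\}$ coincides with the ray through $\bar{w} := w/\|w\|$, both $\hat{m}(w)$ and $m(\bar{w})$ lie on this ray and in $\mathcal{N}$, and by the uniqueness in Proposition \ref{3.1 }(i) (see Remark \ref{rm3.2}) they must agree. Since the normalization $w \mapsto w/\|w\|$ is continuous on $E\backslash\{0\}$, this reduces the continuity of $\hat{m}$ to the continuity of $m$ on $S$. Well-definedness of both maps into $\mathcal{N}$ is immediate from Remark \ref{rm3.2}.

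Next I would establish that $m$ is a bijection with inverse $m^{-1}(u) = u/\|u\|$. For surjectivity, given $u \in \mathcal{N}$ set $\bar{u} = u/\|u\| \in S$; then $u$ lies on the ray $\{s\bar{u} : s>0\}$ and belongs to $\mathcal{N}$, so uniqueness forces $u = s_{\bar u}\bar u = m(\bar u)$. For injectivity, if $m(w_1) = m(w_2)$ then $s_{w_1}w_1 = s_{w_2}w_2$; taking norms and using $w_i \in S$, $s_{w_i} > 0$ gives $s_{w_1} = s_{w_2}$ and hence $w_1 = w_2$. The same computation identifies the inverse as $u \mapsto u/\|u\|$, which is continuous on $\mathcal{N}$ because $0 \notin \mathcal{N}$.

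The heart of the argument, and the main obstacle, is the continuity of $m$. I would argue by sequences: let $w_n \to w$ in $S$ and put $K = \{w_n\}_n \cup \{w\}$, a compact subset of $S$. By Proposition \ref{3.1 }(ii)--(iii) the sequence $\{s_{w_n}\}_n$ satisfies $\delta \leq s_{w_n} \leq C_K$, so it is bounded and bounded away from zero. It then suffices to show that every convergent subsequence $s_{w_{n_k}} \to s_*$ has limit $s_* = s_w$, for then the whole sequence converges to $s_w$ and $m(w_n) = s_{w_n}w_n \to s_w w = m(w)$. Along such a subsequence, $u_k := s_{w_{n_k}}w_{n_k} \to s_* w =: u_*$ in $E$, and since $\Phi \in C^{1}(E,\mathbb{R})$ the derivative $\Phi'$ is continuous; the splitting
$$
|\Phi'(u_k)u_k - \Phi'(u_*)u_*| \leq \|\Phi'(u_k)\|_{E^*}\, \|u_k - u_*\| + \|\Phi'(u_k) - \Phi'(u_*)\|_{E^*}\, \|u_*\|
$$
shows $\Phi'(u_k)u_k \to \Phi'(u_*)u_*$. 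As $u_k \in \mathcal{N}$ we have $\Phi'(u_k)u_k = 0$, whence $\Phi'(u_*)u_* = 0$; moreover $u_* = s_* w \neq 0$ because $s_* \geq \delta > 0$, so $u_* \in \mathcal{N}$. Since $u_*$ lies on the ray through $w$, which meets $\mathcal{N}$ only at $s_w w$ (Remark \ref{rm3.2}), we conclude $s_* w = s_w w$ and thus $s_* = s_w$. Combining this with the reduction above gives the continuity of $\hat{m}$, and together with the bijectivity and the continuity of $m^{-1}$ this shows that $m$ is a homeomorphism.
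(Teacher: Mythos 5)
Your proof is correct, but at its core it runs through a different mechanism than the paper's. Where you pass to the limit in the Nehari constraint itself --- using that $\Phi\in C^{1}(E,\mathbb{R})$ makes $u\mapsto \Phi'(u)u$ continuous, so that $u_k\in\mathcal{N}$, $u_k\to u_*\neq 0$ forces $\Phi'(u_*)u_*=0$ and hence $u_*\in\mathcal{N}$, after which Remark \ref{rm3.2} pins down $s_*=s_w$ --- the paper never touches $\Phi'$ at this stage. Instead, after normalizing and extracting $s_n\to\bar{s}>0$, it compares energies: $\Phi(s_u u)\geq\Phi(\bar{s}u)$ because $s_u$ maximizes $\alpha_u$, and $\Phi(s_u u)=\lim_n\Phi(s_u u_n)\leq\lim_n\Phi(s_n u_n)=\Phi(\bar{s}u)$ because each $s_n$ maximizes $\alpha_{u_n}$; uniqueness of the maximum point in Proposition \ref{3.1 }(i) then gives $\bar{s}=s_u$. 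The trade-off is what ingredient must be continuous: your route needs continuity of the derivative map $\Phi'\colon E\to E^{*}$, which the paper asserts ("one can check that $\Phi\in C^{1}(E,\mathbb{R})$") but never verifies, whereas the paper's route needs only continuity of $\Phi$ itself, and it proves exactly that inside the proof by an explicit estimate on $\sum_{x}\bigl(F(x,w_n)-F(x,w)\bigr)$ via Lemma \ref{epsilon} and H\"older. So your argument is cleaner and shorter granted the standing $C^1$ claim, while the paper's is more self-contained at this point; your bookkeeping is also slightly tidier (the compact set $K=\{w_n\}\cup\{w\}$ to invoke Proposition \ref{3.1 }(iii), and the observation that a bounded real sequence all of whose convergent subsequences share the limit $s_w$ must itself converge, which spares the reader the "up to a subsequence" phrasing of the paper). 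The bijectivity and inverse-continuity parts of your proof coincide with the paper's.
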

\begin{proof}
	Let $u\in E\backslash \{0\}$. It suffices to show that each sequence $\{u_{n}\}_{n} \subset E \backslash \{0\}$ with $u_{n} \to u\, (u\neq 0)$ admits a subsequence such that $\hat{m}(u_{n}) \to \hat{m}(u)$. Since $\hat{m}(tu)=\hat{m}(u)$ for any $t>0$, we may assume that $\|u_{n}\|=\|u\|=1$. Let $\hat{m}(u_{n})=s_{n}u_{n}$ and $\hat{m}(u)=s_{u}u$. By Proposition \ref{3.1 }, there exist $\delta>0, C>0$ such that $\delta \leq s_{n} \leq C$, $\forall n \in \mathbb{N}$. We may assume that $s_{n}\to \bar{s}>0$ after passing to a subsequence.
	Then $\hat{m}(u_{n}) = s_{n}u_{n} \to \bar{s}u$ as $n\to \infty$ and it suffices to show $\bar{s} u = s_{u}u$. 
	According to Proposition \ref{3.1 }(i) and Remark \ref{rm3.2}, $\Phi(s_{u}u) \geq \Phi(\bar{s}u)$ as $s_{u}u \in \mathcal{N}$. The reverse inequality follows from 
	$$\Phi(s_{u}u) = \lim_{n} \Phi(s_{u}u_{n}) \leq \lim_{n}\Phi(s_{n}u_{n}) = \Phi(\bar{s}u)$$
	since $\Phi$ is continuous on $E$. 
In fact,
	$$\Phi(w_{n})-\Phi(w) = \frac{1}{p}\big( \|w_{n}\|^{p}- \|w\|^{p} \big) - \sum_{x\in\mathbb{V}} \big( F(x,w_{n}) - F(x,w) \big),$$
	and
	 $$| \sum_{x\in \mathbb{V}} \big(F(x,w_{n}) - F(x,w)\big)  | 
	\leq \sum_{x\in \mathbb{V}} |f(x,\zeta_{n})||w_{n} - w|, $$
where $\zeta_{n}$ is between $w_{n}$ and $w$. Take $\epsilon = 1 $ in Lemma \ref{epsilon}, we have
\begin{equation}
     |f(x,\zeta_{n})| \leq |\zeta_{n}|^{p-1} + C_{1}|\zeta_{n}|^{q-1}.
\end{equation} 
Then, by H\"{o}lder inequality, 
\begin{equation}
	\begin{aligned}
		   &  | \sum_{x\in\mathbb{V}} F(x,w_{n}) - F(x,w)  | \\	
		\leq & \sum_{x\in \mathbb{V}} |w_{n} - w|( |\zeta_{n}|^{p-1} + C_{1}|\zeta_{n}|^{q-1} )  \\
		\leq& \big(\sum_{x\in\mathbb{V}} |w_{n} - w|^{p} \big)^{\frac{1}{p}} \big(\sum_{x\in\mathbb{V}} |\zeta_{n}|^{p}  \big)^{\frac{p-1}{p}}
		+ C_{1}\big(\sum_{x\in\mathbb{V}} |w_{n} - w|^{q} \big)^{\frac{1}{q}} \big(\sum_{x\in\mathbb{V}} |\zeta_{n}|^{q}  \big)^{\frac{q-1}{q}}\\
		= &\|w_{n}-w\|_{p} \|\zeta_{n}\|_{p}^{p-1}+
		C_{1}\|w_{n}-w\|_{q} \|\zeta_{n}\|_{q}^{q-1}\\
		\to & 0 \quad \text{ as } n \to \infty.
	\end{aligned}
\end{equation}
This yields the continuity of $\hat{m}$.
Now we consider the restriction of $\hat{m}$ on the unit sphere $m: S \to \mathcal{N}$.
Let $w_{i}\in S$, $m(w_{i})=s_{i}w_{i}, i=1,2$. If $m(w_{1}) = m(w_{2})$, i.e., $s_{1}w_{1}=s_{2}w_{2}$, we have $s_{1} = s_{2}$ after taking norm on both sides.
Since $s_{i} \geq \delta >0$, one sees $w_{1}=w_{2}$ and $m$ is injective. 
For each $u\in\mathcal{N}$, we know $\frac{u}{\|u\|} \in S$ and $m(\frac{u}{\|u\|})=u$ by Proposition \ref{3.1 }. Therefore $m$ is surjective and thus homeomorphism with its continuous inverse given by $u \mapsto \frac{u}{\|u\|}$, proving the result.
\end{proof}
Consider the composite functional 	$\hat{\Psi} := \Phi \circ \hat{m} $
\begin{eqnarray}
	\hat{\Psi} : E\backslash \{0\} &\to& \mathbb{R}  \nonumber \\
                                                        	w&\mapsto& \hat{\Psi}(w) = \Phi(\hat{m}(w))  \nonumber
\end{eqnarray}
The goal is to establish a one-to-one correspondence between the critical points of $\hat{\Psi}|_{S}$ and $\Phi|_{\mathcal{N}}$.

\begin{proposition}\label{prop3.4}
	$\hat{\Psi} \in C^{1}(E\backslash\{0\},\mathbb{R})$ and 
	$$\hat{\Psi}'(w)z = \frac{\|\hat{m}(w)\|}{\|w\|} \Phi'(\hat{m}(w))z,\quad \forall w,z \in E\backslash\{0\}.$$
\end{proposition}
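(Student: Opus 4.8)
The plan is to compute the Gateaux derivative of $\hat{\Psi}$ directly, exploiting the fact that $\hat{m}(w)=s_w w$ \emph{maximizes} $\Phi$ along the ray $\{\sigma w:\sigma>0\}$ (Proposition \ref{3.1 }(i)). The crucial point is that this maximality lets me bypass differentiating the scalar $s_w$ in $w$, about which a priori I know only continuity (from Proposition \ref{3.2}, since $s_w=\|\hat{m}(w)\|/\|w\|$).

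First I would fix $w,z\in E\backslash\{0\}$ and write $s_t:=s_{w+tz}$ for small $t$. To estimate the difference quotient of $\hat{\Psi}$, I would sandwich it from both sides using the maximality of $s_w$ and of $s_t$. For the upper bound, since $\Phi(s_w w)=\max_{\sigma>0}\Phi(\sigma w)\ge\Phi(s_t w)$,
\[
\hat{\Psi}(w+tz)-\hat{\Psi}(w)\le\Phi\big(s_t(w+tz)\big)-\Phi(s_t w),
\]
and the mean value theorem applied to $\lambda\mapsto\Phi(s_t w+\lambda s_t t z)$ on $[0,1]$ produces a $\theta_t\in(0,1)$ with the right-hand side equal to $s_t t\,\Phi'\big(s_t(w+\theta_t t z)\big)z$. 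For the lower bound, since $\Phi(s_t(w+tz))=\max_{\sigma>0}\Phi(\sigma(w+tz))\ge\Phi(s_w(w+tz))$, the same device gives an $\eta_t\in(0,1)$ with
\[
\hat{\Psi}(w+tz)-\hat{\Psi}(w)\ge\Phi\big(s_w(w+tz)\big)-\Phi(s_w w)=s_w t\,\Phi'\big(s_w(w+\eta_t t z)\big)z.
\]

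Next I would pass to the limit $t\to0^+$. By Proposition \ref{3.2} the map $\hat{m}$ is continuous, so $\hat{m}(w+tz)=s_t(w+tz)\to s_w w$; taking norms and using $\|w+tz\|\to\|w\|\ne0$ forces $s_t\to s_w$. Hence both interior evaluation points $s_t(w+\theta_t t z)$ and $s_w(w+\eta_t t z)$ converge to $s_w w$, and since $\Phi\in C^1(E,\mathbb{R})$ its derivative $\Phi'$ is continuous. Dividing the two displayed inequalities by $t>0$ and squeezing shows the right derivative of $t\mapsto\hat{\Psi}(w+tz)$ at $0$ equals $s_w\Phi'(s_w w)z$; replacing $z$ by $-z$ handles the left derivative. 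Thus the Gateaux derivative exists with $\hat{\Psi}'(w)z=s_w\Phi'(\hat{m}(w))z$, and substituting $s_w=\|\hat{m}(w)\|/\|w\|$ yields exactly the claimed formula.

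Finally, for the $C^1$ conclusion I would observe that the map $w\mapsto s_w\Phi'(\hat{m}(w))$ from $E\backslash\{0\}$ into $E^*$ is continuous: $w\mapsto\hat{m}(w)$ is continuous by Proposition \ref{3.2}, $w\mapsto s_w=\|\hat{m}(w)\|/\|w\|$ is therefore continuous, and $\Phi'$ is continuous. A functional whose Gateaux derivative exists and is continuous is Fr\'echet differentiable of class $C^1$, giving $\hat{\Psi}\in C^1(E\backslash\{0\},\mathbb{R})$. The main obstacle is precisely the potentially nonsmooth dependence of $s_w$ on $w$; the two-sided squeeze built from the maximality of $s_w$ is the device that removes it, so that only the already-established continuity of $\hat{m}$ is required and no derivative of $s_w$ ever appears.
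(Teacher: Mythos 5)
Your proposal is correct and follows essentially the same route as the paper: the two-sided sandwich built from the maximality of $s_w$ and $s_{w+tz}$ along their respective rays, the mean value theorem, the continuity of $\hat{m}$ (hence of $s_w$) to pass to the limit, and the standard fact that a continuous Gateaux derivative yields $C^1$ (the paper cites Proposition 1.3 of Willem for this last step). Your treatment is in fact slightly more careful than the paper's on one point, namely handling $t\to 0^{+}$ and $t\to 0^{-}$ separately so that dividing the inequalities by $t$ never reverses them.
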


\begin{proof}
Given $w,z \in E\backslash \{0\}$, take $\gamma>0$ small enough such that $w_{t}:=w+tz \neq 0, \forall |t|<\gamma$.  
Let $\hat{m}(w_{t}) = s_{t}(w_{t})$.
Then $\hat{m}(w) = s_{0}w$ and $s_{t}w_{t}\to s_{0}w$ as $t \to 0$ from Proposition \ref{3.2}.
 We calculate that
\begin{equation} \label{5}
	\begin{aligned}
	   & \frac{1}{t}(\hat{\Psi}(w_{t})-\hat{\Psi}(w)) 
	  = \frac{1}{t} (\Phi(s_{t}w_{t})-\Phi(s_{0}w)) 
  \geq  \frac{1}{t} (\Phi(s_{0}w_{t})-\Phi(s_{0}w))   \\
	=& \frac{1}{t} \Phi'\big(s_{0}[w+\eta_{t}(w_{t}-w)]\big)  s_{0}(w_{t}-w),   \quad \eta_{t} \in (0,1),\\
	\to & \Phi'(s_{0}w)s_{0}z \quad \text{ as } t\to 0,
	\end{aligned}
\end{equation}
and
\begin{equation}\label{6}
	\begin{aligned}
		& \frac{1}{t}(\hat{\Psi}(w_{t})-\hat{\Psi}(w)) 
		\leq  \frac{1}{t} (\Phi(s_{t}w_{t})-\Phi(s_{t}w))   \\
		=& \frac{1}{t} \Phi'\big(s_{t}[w+\gamma_{t}(w_{t}-w)]\big)  s_{t}(w_{t}-w)  ,   \quad \gamma_{t} \in (0,1),\\
		\to & \Phi'(s_{0}w)s_{0}z \quad \text{ as } t\to 0.
	\end{aligned}
\end{equation}

Combining \eqref{5} with \eqref{6}, we get
\begin{equation}\nonumber
	\hat{\Psi}'(w)z = \lim_{t\to 0} \frac{\hat{\Psi}(w_{t})-\hat{\Psi}(w)}{t}= \Phi'(s_{0}w)s_{0}z
	= \frac{\|\hat{m}(w)\| }{\|w\|} \Phi'(\hat{m}(w))z.
\end{equation}
Since $\hat{m}$ is continuous and $\Phi \in C^{1}(E\backslash \{0\}, \mathbb{R})$, $\hat{\Psi}$ admits comtinuous Gateaux derivative on $E\backslash\{0\}$. Thus, by Proposition 1.3  in \cite{MR1400007}, $\hat{\Psi} \in C^{1} (E\backslash \{0\},\mathbb{R})$.
\end{proof}
One easily gets the following lemma.
\begin{lemma}
	Let $$\phi(u) = \frac{1}{p}\|u\|^{p}
	= \sum_{x,y\in \mathbb{V},x\sim y}\frac{1}{2}| \nabla_{xy}u |^{p} + \sum_{x\in \mathbb{V}}V(x)|u(x)|^{p}.$$
	 Then $\phi \in C^{1}(E,\mathbb{R})$ and
	$$\phi(w) = \frac{1}{p}, \quad \phi'(w)w = 1,\quad \quad \forall w\in S, $$
where
	$$\phi'(w)v =\sum_{x,y\in \mathbb{V},x\sim y} \frac{1}{2}| \nabla_{xy}w |^{p-2} (\nabla_{xy}w) (\nabla_{xy}v)+ \sum_{x\in \mathbb{V}}V(x)|w(x)|^{p-2}wv ,\quad \forall v\in E.$$
	For each $w\in S$, set 
	$$T_{w}(S) = \{ v\in E: \phi'(w)v = 0 \} = ker(\phi'(w)).$$
	We have direct sum decomposition:
	$$E=\mathbb{R}_{w} \oplus T_{w}S.$$
\end{lemma}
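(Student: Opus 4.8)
The plan is to treat the three assertions in turn, the first two by direct computation and the last by elementary functional analysis.

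First, the regularity $\phi \in C^{1}(E,\mathbb{R})$ together with the displayed formula for $\phi'$ follows exactly as for $\Phi$. Since $p>1$, the scalar map $t\mapsto |t|^{p}$ is $C^{1}$ with derivative $p|t|^{p-2}t$, so every summand of $\phi$ is differentiable; differentiating term by term (the interchange of the limit defining the Gateaux derivative with the summation being justified just as in the already-asserted verification that $\Phi \in C^{1}$, because $\phi$ is precisely the part $\frac{1}{p}\|\cdot\|^{p}$ of $\Phi$ with the nonlinear term $F$ removed) yields the stated expression for $\phi'(w)v$. Continuity of $w\mapsto \phi'(w)$ then upgrades the Gateaux derivative to a Fr\'echet derivative.

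Second, for $w\in S$ the two identities are immediate substitutions. Clearly $\phi(w)=\frac{1}{p}\|w\|^{p}=\frac{1}{p}$, and taking $v=w$ in the formula for $\phi'$ collapses $|\nabla_{xy}w|^{p-2}(\nabla_{xy}w)(\nabla_{xy}w)$ to $|\nabla_{xy}w|^{p}$ and $V(x)|w(x)|^{p-2}w(x)w(x)$ to $V(x)|w(x)|^{p}$, so that $\phi'(w)w=\|w\|^{p}=1$.

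Third, for the decomposition I would use that $\phi'(w)$ is a bounded linear functional on $E$ which is nonzero precisely because $\phi'(w)w=1\neq 0$; hence $T_{w}S=\ker(\phi'(w))$ is a closed hyperplane of codimension one. Given any $u\in E$, set $\lambda=\phi'(w)u$ and $v=u-\lambda w$; then $\phi'(w)v=\phi'(w)u-\lambda\,\phi'(w)w=0$, so $v\in T_{w}S$ and $u=\lambda w+v$, proving $E=\mathbb{R}w+T_{w}S$. The sum is direct since $\mu w\in T_{w}S$ forces $0=\phi'(w)(\mu w)=\mu$, whence $\mathbb{R}w\cap T_{w}S=\{0\}$. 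Since every step is explicit, there is no real obstacle here; the only point that genuinely needs care is the rigorous justification of term-by-term differentiation of the infinite sum defining $\phi$, but this is a special case of the $C^{1}$ regularity already established for $\Phi$.
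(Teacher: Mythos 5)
Your proof is correct; the paper itself offers no argument for this lemma (it is prefaced only by ``One easily gets the following lemma''), and your write-up is exactly the routine verification the authors intend: term-by-term differentiation to get the formula for $\phi'$, direct substitution for the identities $\phi(w)=\tfrac{1}{p}$ and $\phi'(w)w=1$ on $S$, and the standard decomposition $u=(\phi'(w)u)\,w+\bigl(u-(\phi'(w)u)\,w\bigr)$ along the kernel of the continuous linear functional $\phi'(w)$. The only ingredient you assert rather than verify --- boundedness of $\phi'(w)$, i.e.\ $|\phi'(w)v|\le 2\|w\|^{p-1}\|v\|$ --- is precisely the H\"{o}lder estimate the paper records later in the proof of Proposition \ref{3.7}, so nothing essential is missing.
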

 By Proposition \ref{prop3.4}, we have the following corollary.
\begin{corollary}\label{corollary3.5}
	 Set $\Psi = \hat{\Psi}|_{S}$. Then $\Psi \in C^{1}(S)$ and
	 $$
	 \Psi'(w)z = \|m(w)\| \Phi'(m(w))z,\quad \forall z\in T_{w}S,
	 $$
\end{corollary}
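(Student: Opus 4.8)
The plan is to exploit the fact that $S$ is a $C^{1}$-Banach submanifold of $E\setminus\{0\}$ and that the restriction of a $C^{1}$ functional to such a submanifold is again $C^{1}$, with differential obtained by restricting the ambient differential to the tangent space. Everything then reduces to substituting the formula of Proposition \ref{prop3.4}.

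First I would record that $S$ is a regular level set. By the preceding lemma, $\phi\in C^{1}(E,\mathbb{R})$, $\phi\equiv\tfrac1p$ on $S$, and $\phi'(w)w=1$ for every $w\in S$; in particular $\phi'(w)\neq 0$, so $\tfrac1p$ is a regular value of $\phi$ and $S=\phi^{-1}(\tfrac1p)$ is a $C^{1}$-submanifold of codimension one whose tangent space at $w$ is exactly $T_{w}S=\ker\phi'(w)$. The splitting $E=\mathbb{R}_{w}\oplus T_{w}S$ from the lemma is precisely the manifold structure I will use.

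Next, since $S\subset E\setminus\{0\}$ and $\hat{\Psi}\in C^{1}(E\setminus\{0\},\mathbb{R})$ by Proposition \ref{prop3.4}, the restriction $\Psi=\hat{\Psi}|_{S}$ is $C^{1}$ on the manifold $S$: reading it in a local $C^{1}$-chart of $S$ (equivalently, along $C^{1}$ curves lying in $S$) preserves $C^{1}$-regularity. To identify $\Psi'(w)$ for $w\in S$ and $z\in T_{w}S$, I would choose a $C^{1}$ curve $\gamma:(-\epsilon,\epsilon)\to S$ with $\gamma(0)=w$ and $\gamma'(0)=z$, which exists because $z$ is tangent to $S$ at $w$. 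The chain rule then gives
$$\Psi'(w)z=\frac{d}{dt}\Big|_{t=0}\Psi(\gamma(t))=\frac{d}{dt}\Big|_{t=0}\hat{\Psi}(\gamma(t))=\hat{\Psi}'(w)\gamma'(0)=\hat{\Psi}'(w)z,$$
so $\Psi'(w)$ is exactly the restriction of $\hat{\Psi}'(w)$ to $T_{w}S$.

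Finally I would insert the formula of Proposition \ref{prop3.4}. For $w\in S$ we have $\|w\|=1$ and $\hat{m}(w)=m(w)$, hence
$$\Psi'(w)z=\hat{\Psi}'(w)z=\frac{\|\hat{m}(w)\|}{\|w\|}\,\Phi'(\hat{m}(w))z=\|m(w)\|\,\Phi'(m(w))z,\qquad \forall z\in T_{w}S,$$
which is the claimed identity. The only point demanding any care is the passage from ``$\hat{\Psi}$ is $C^{1}$ on the open set $E\setminus\{0\}$'' to ``$\Psi$ is $C^{1}$ on the manifold $S$'' together with the identification $\Psi'(w)=\hat{\Psi}'(w)|_{T_{w}S}$; this is where the regular-level-set structure of $S$ supplied by the lemma is essential, since it makes both the submanifold chart and the tangent space $T_{w}S$ completely explicit. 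I do not expect a genuine obstacle here: once the submanifold structure is in place, the result is a direct application of the restriction principle followed by substitution.
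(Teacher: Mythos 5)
Your proposal is correct and follows essentially the same route as the paper: the paper states this corollary as an immediate consequence of Proposition \ref{prop3.4} together with the preceding lemma on $\phi$ (which supplies $T_{w}S=\ker\phi'(w)$ and the splitting $E=\mathbb{R}_{w}\oplus T_{w}S$), and your argument simply makes explicit the standard details that the paper leaves implicit — the regular-level-set structure of $S$, the chain rule along curves in $S$, and the substitution $\|w\|=1$, $\hat{m}|_{S}=m$.
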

where $\Psi'$ is understood as the derivative of a function on Banach manifold.
\begin{proposition}\label{coercive}
	$\Phi$ is coercive on $\mathcal{N}$, i.e., if $\{u_{n}\} \subset \mathcal{N}$ with $\|u_{n}\| \to \infty$, then $\Phi(u_{n}) \to \infty$ as $n \to \infty$.
\end{proposition}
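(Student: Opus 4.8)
The plan is to argue by contradiction. Suppose $\{u_n\}\subset\mathcal N$ with $\|u_n\|\to\infty$ but $\Phi(u_n)\not\to\infty$; after passing to a subsequence we may assume $\Phi(u_n)\le d$ for some finite $d$. The first thing I would record is that $\Phi>0$ on $\mathcal N$: substituting the Nehari identity $\frac1p\|u_n\|^p=\frac1p\sum_x f(x,u_n)u_n$ into $\Phi$ gives $\Phi(u_n)=\sum_{x}\big(\frac1p f(x,u_n)u_n-F(x,u_n)\big)$, and each summand is strictly positive by Lemma \ref{f&F}. Next I would normalize $v_n:=u_n/\|u_n\|\in S$, so that Lemma \ref{lem2.1} yields comparable $\ell^p$-norms, $c_1\le\|v_n\|_p\le c_2$. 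The decisive structural input, from Proposition \ref{3.1 }(i) and Remark \ref{rm3.2}, is that $u_n=s_{v_n}v_n$ maximizes $s\mapsto\Phi(sv_n)$, hence $\Phi(u_n)\ge\Phi(Rv_n)$ for every fixed $R>0$.

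I would then split according to whether $\{v_n\}$ vanishes in $\ell^\infty$. In the \emph{vanishing} case $\|v_n\|_\infty\to0$ (along a subsequence), the interpolation $\|v_n\|_q^q\le\|v_n\|_\infty^{q-p}\|v_n\|_p^p$ forces $\|v_n\|_q\to0$. Integrating Lemma \ref{epsilon} gives $F(x,u)\le\frac{\epsilon}{p}|u|^p+\frac{C_\epsilon}{q}|u|^q$, so for fixed $R$,
$$\Phi(u_n)\ge\Phi(Rv_n)=\frac{R^p}{p}-\sum_{x\in\mathbb V}F(x,Rv_n)\ge\frac{R^p}{p}\big(1-\epsilon c_2^p\big)-\frac{C_\epsilon}{q}R^q\|v_n\|_q^q.$$
Choosing $\epsilon$ with $\epsilon c_2^p<\tfrac12$ and letting $n\to\infty$ gives $\liminf_n\Phi(u_n)\ge\frac{R^p}{2p}$; since $R$ is arbitrary this forces $\Phi(u_n)\to\infty$, contradicting $\Phi(u_n)\le d$.

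In the \emph{non-vanishing} case there exist $\delta_0>0$ and vertices $x_n$ with $|v_n(x_n)|\ge\delta_0$ along a subsequence, so $|u_n(x_n)|=\|u_n\|\,|v_n(x_n)|\ge\delta_0\|u_n\|\to\infty$. Here I would invoke the \emph{uniformity} in $(A_5)$: since $F(x,u)/|u|^p\to\infty$ uniformly in $x$ and $F\ge0$, the single term at $x_n$ already dominates, giving
$$\frac{\Phi(u_n)}{\|u_n\|^p}=\frac1p-\sum_{x\in\mathbb V}\frac{F(x,u_n)}{\|u_n\|^p}\le\frac1p-\frac{F(x_n,u_n(x_n))}{|u_n(x_n)|^p}\,\delta_0^p\to-\infty,$$
so $\Phi(u_n)\to-\infty$, contradicting $\Phi>0$ on $\mathcal N$. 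Since both alternatives are impossible, $\Phi(u_n)\to\infty$.

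I expect the main obstacle to be the non-vanishing alternative: unlike the continuous bounded-domain setting, a normalized sequence with $v_n\rightharpoonup0$ need not converge to $0$ in $\ell^q$, since mass can escape to infinity (a traveling bump), so one cannot simply pass to the weak limit of $v_n$. The dichotomy above isolates exactly this phenomenon, and the \emph{uniform-in-}$x$ growth hypothesis $(A_5)$ (which is meaningful precisely because of the periodicity in $(A_2)$) is what allows a single escaping vertex to drive $\Phi(u_n)$ to $-\infty$, so that no explicit recentering of the sequence is needed.
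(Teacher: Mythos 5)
Your proof is correct, and its skeleton coincides with the paper's: argue by contradiction, normalize $v_n=u_n/\|u_n\|$, use the ray-maximality $\Phi(u_n)\ge\Phi(Rv_n)$ coming from Proposition \ref{3.1 } and Remark \ref{rm3.2}, and split into a vanishing and a non-vanishing alternative; your vanishing case is essentially the paper's estimate (the paper states the dichotomy in $\ell^q$ rather than $\ell^\infty$, but passes between the two via exactly the interpolation inequality you use). The genuine difference is the non-vanishing case. The paper does not exploit the uniformity of $(A_5)$ at moving vertices: it recenters by period translations $\tilde u_n(\cdot)=u_n(\cdot+k_nT)$ --- legitimate since $(A_2)$ makes $\|\cdot\|$, $\Phi$ and $\mathcal{N}$ translation invariant --- so that the bump vertices land in the finite fundamental domain $\Omega=[0,T)^N\cap\mathbb{V}$, then uses Lemma \ref{embedding} to extract weak and pointwise limits, and only then applies $(A_5)$ at a single \emph{fixed} vertex $x_0$, reaching the same contradiction with $\Phi>0$ on $\mathcal{N}$ (a positivity the paper uses silently and you justify explicitly via Lemma \ref{f&F} and the Nehari identity). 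Your shortcut is valid as stated: since $(A_5)$ is assumed uniform in $x$ and $F\ge 0$, the single term at $x_n$ forces $\Phi(u_n)/\|u_n\|^p\to-\infty$ with no recentering or compactness argument; and nothing is lost in generality, because $T$-periodicity plus pointwise $(A_5)$ already implies the uniform version (the fundamental domain is finite). What the paper's longer route buys is the recentering technique itself, which is indispensable later in the proof of Theorem \ref{thm1}, where one must produce an actual nonzero limit function and uniformity alone would not suffice; what your route buys is a shorter, more self-contained proof of this particular proposition.
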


\begin{proof}
Argue by contradiction. Suppose  $\{u_{n}\} \subset \mathcal{N}$ with $\|u_{n}\| \to \infty$ as $n \to \infty$ and $\Phi(u_{n}) \leq d$ for each $n\in \mathbb{N}$.
 Let $v_{n} = \frac{u_{n}}{\|u_{n}\|} \in S,\, \forall n\in \mathbb{N}$.
 Note that $\sup_{n\in \mathbb{N}} \|v_{n}\|_{p} < \infty$.
Suppose that $v_{n} \to 0$ in $\ell^{q}(\mathbb{V})$. By Lemma \ref{epsilon}, for any $\epsilon >0$, there exists $C_{\epsilon}>0$ such that 
$$ \big| \sum_{x\in\mathbb{V}} F(x,\lambda v_{n}) \big|
\leq \frac{1}{p} \big(\epsilon \|\lambda v_{n}\|_{p}^{p} + C_{\epsilon}\| \lambda v_{n}\|_{q}^{q}\big) 
\leq C\epsilon + o(1), \quad n\to \infty,
$$
for any $\lambda>0$.
Letting $\epsilon \to 0^{+}$, we have 
$$\sum_{x\in\mathbb{V}} F(x,\lambda v_{n})	\to 0,\quad \forall \lambda>0.$$
Then Proposition \ref{3.1 } gives
\begin{equation}
	d \geq \Phi(u_{n}) \geq \Phi(\lambda v_{n}) 
	= \frac{1}{p} \lambda^{p} - \sum_{x\in\mathbb{V}} F(x,\lambda v_{n})  
	\to \frac{1}{p} \lambda^{p} \text{ as } n\to \infty, \quad \forall \lambda>0.
\end{equation}
This is a contradiction. Hence, $v_{n} \nrightarrow 0$ in $\ell^{q}(\mathbb{V})$, i.e., 
$$\varlimsup_{n\to \infty} \|v_{n}\|_{q} =: A >0.$$
	By the interpolation inequality and $q>p>1$,
	$$\|v_{n}\|_{q} \leq \|v_{n}\|_{p}^{\frac{p}{q}}\|v_{n}\|_{\infty}^{\frac{q-p}{q}}
	\leq V_{1}^{-\frac{p}{q}}\|v_{n}\|_{\infty}^{\frac{q-p}{q}}.$$
	Hence, $$ \varlimsup_{n\to \infty} \|v_{n}\|_{\infty}>0.$$
Therefore, there exist  subsequences $\{v_{n}\} \subset S$, $\{y_{n}\} \subset \mathbb{Z}^{N}$ and $A'>0$ such that
 $$|v_{n}(y_{n})| \geq \frac{1}{2} \|v_{n}\|_{\infty} \geq A' >0,\quad \forall n\in \mathbb{N}.$$
Set
$\tilde{v}_{n}(\cdot) = v_{n} (\cdot + k_{n}T)$ with $k_{n} = (k_{n}^{1},\cdots,k_{n}^{N})\in \mathbb{V}$ such that $\{ y_{n}-k_{n}T \} \subset  \Omega$, where $\Omega = [0,T)^{N}\cap \mathbb{V}$ is a bounded set. Then
$$\|\tilde{v}_{n}\|_{\ell^{\infty}(\Omega)} \geq |v_{n}(y_{n})| \geq A' >0.$$
Let $\tilde{u}_{n}(\cdot) = u_{n} (\cdot + k_{n}T)$. By the translation invariance of $\| \cdot \|$ and $\Phi$,
$ \|\tilde{u}_{n}\| = \|u\| \to \infty, \,\tilde{u}_{n} \in \mathcal{N} \text{ and } m(\tilde{v}_{n}) = \tilde{u}_{n}.$
Since $\{\tilde{v}_{n}\} \subset S $, we may assume up to a subsequence that $\tilde{v}_{n} \rightharpoonup v$ in $E$ and $\tilde{v}_{n}(x) \to v(x)$ for each vertex in $ \mathbb{V}$. Since there are finitely many vertices in $\Omega$, we have $\tilde{v}_{n}(x_{0}) \to v(x_{0}) >0$ for some $x_{0} \in \Omega$.
Then $$|\tilde{u}_{n}(x_{0})|=\|\tilde{u}_{n}\| |\tilde{v}_{n}(x_{0})| \to \infty \text{ as }  n\to \infty,$$
where
 $\tilde{u}_{n} = m(\tilde{v}_{n}) \in \mathcal{N}.$
Note that by $(A_{5})$,
$$  \frac{F(x_{0},\tilde{u}_{n}(x_{0}))}{|\tilde{u}_{n}|^{p}(x_{0})} \to \infty  \quad \text{ as } n\to \infty.  $$
Then
\begin{equation}
		0< \frac{\Phi(\tilde{u}_{n})}{\|\tilde{u}_{n}\|^{p}} 
		= \frac{1}{p} - \sum_{x\in\mathbb{V}} \frac{F(x,\tilde{u}_{n})}{\| \tilde{u}_{n} \|^{p}} 
		\leq \frac{1}{p} - \frac{F(x_{0}, \tilde{u}_{n}(x_{0}))}{| \tilde{u}_{n} |^{p}(x_{0})}|\tilde{v}_{n}|^{p}(x_{0})  
		\to -\infty \quad 
\end{equation}
as  $n\to \infty$, which is a contradiction. This proves the proposition.
\end{proof}

\begin{proposition}\label{3.7}
	$\{w_{n}\}_{n} \subset S$  is a Palais-Smale sequence for $\Psi$ if and only if $\{u_{n}\} \subset \mathcal{N}$ is a  Palais-Smale sequence for $\Phi$, where $u_{n} = m(w_{n})$ for each $n\in\mathbb{N}$.
\end{proposition}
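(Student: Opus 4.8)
The plan is to reduce the statement to two facts: the energy values of $\Phi$ on $\mathcal{N}$ and of $\Psi$ on $S$ agree along the correspondence $u_{n}=m(w_{n})$, and the two derivative norms $\|\Phi'(u_{n})\|_{E^{*}}$ and $\|\Psi'(w_{n})\|_{T_{w_{n}}^{*}S}$ are comparable once $\|m(w_{n})\|$ is controlled from above and below. I interpret a Palais--Smale sequence for $\Phi$ on $\mathcal{N}$ in the usual (Szulkin--Weth) sense, namely $\{u_{n}\}\subset\mathcal{N}$ with $\Phi(u_{n})$ bounded and $\Phi'(u_{n})\to 0$ in $E^{*}$. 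Since $\Psi(w)=\Phi(\hat{m}(w))=\Phi(m(w))$ by definition, the boundedness of $\Phi(u_{n})$ and of $\Psi(w_{n})$ are literally the same condition, so only the derivative part requires work.

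The heart of the argument is a two-sided estimate
\begin{equation}\notag
\tfrac12\,\|m(w)\|\,\|\Phi'(m(w))\|_{E^{*}}\le \|\Psi'(w)\|_{T_{w}^{*}S}\le \|m(w)\|\,\|\Phi'(m(w))\|_{E^{*}},\qquad w\in S.
\end{equation}
To prove it I would first note that for $u=m(w)=s_{w}w\in\mathcal{N}$ one has $\Phi'(u)u=0$, hence $\Phi'(u)w=0$, so $\Phi'(u)$ annihilates the line $\mathbb{R}w$. Using the direct sum decomposition $E=\mathbb{R}w\oplus T_{w}S$ together with $\phi'(w)w=1$, every $v\in E$ with $\|v\|=1$ splits as $v=\tau w+z$ with $\tau=\phi'(w)v$ and $z\in T_{w}S$, and a short H\"older computation shows $\|\phi'(w)\|_{E^{*}}=\|w\|^{p-1}=1$ on $S$; consequently $|\tau|\le 1$ and $\|z\|\le 2$, while $\Phi'(u)v=\Phi'(u)z$. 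This yields $\|\Phi'(u)\|_{E^{*}}\le 2\,\|\Phi'(u)|_{T_{w}S}\|$, and the reverse inequality $\|\Phi'(u)|_{T_{w}S}\|\le\|\Phi'(u)\|_{E^{*}}$ is trivial. Combining these with the identity $\|\Psi'(w)\|_{T_{w}^{*}S}=\|m(w)\|\,\|\Phi'(m(w))|_{T_{w}S}\|$ coming from Corollary \ref{corollary3.5} gives the displayed comparison. I expect this to be the main obstacle: because $p\neq 2$ the norm is not induced by an inner product, so the clean equality $\|\Psi'(w)\|=\|m(w)\|\,\|\Phi'(m(w))\|$ available in the Hilbert setting fails, and one must settle for the comparability above, controlling the geometric defect through $\|\phi'(w)\|_{E^{*}}=1$.

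With the comparison in hand both implications are immediate. For the forward direction, if $\{w_{n}\}\subset S$ is Palais--Smale for $\Psi$, then $\Phi(u_{n})=\Psi(w_{n})$ is bounded and, using the lower bound $s_{w_{n}}=\|m(w_{n})\|\ge\delta$ from Proposition \ref{3.1 }(ii), the left inequality gives $\|\Phi'(u_{n})\|_{E^{*}}\le \tfrac{2}{\delta}\,\|\Psi'(w_{n})\|_{T_{w_{n}}^{*}S}\to 0$, so $\{u_{n}\}$ is Palais--Smale for $\Phi$. For the converse, if $\{u_{n}\}\subset\mathcal{N}$ is Palais--Smale for $\Phi$, then $\Psi(w_{n})=\Phi(u_{n})$ is bounded; boundedness of $\Phi(u_{n})$ together with the coercivity of $\Phi$ on $\mathcal{N}$ (Proposition \ref{coercive}) forces $s_{w_{n}}=\|u_{n}\|\le C$, and the right inequality yields $\|\Psi'(w_{n})\|_{T_{w_{n}}^{*}S}\le C\,\|\Phi'(u_{n})\|_{E^{*}}\to 0$, where $w_{n}=u_{n}/\|u_{n}\|=m^{-1}(u_{n})$. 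Hence $\{w_{n}\}$ is Palais--Smale for $\Psi$, which completes the equivalence.
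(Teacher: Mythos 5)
Your proof is correct and takes essentially the same route as the paper: both rely on Corollary \ref{corollary3.5}, the splitting $E=\mathbb{R}w\oplus T_{w}S$ together with $\Phi'(u_{n})w_{n}=0$ to compare $\|\Phi'(u_{n})\|_{E^{*}}$ with its restriction to $T_{w_{n}}S$, and Proposition \ref{3.1 }(ii) plus the coercivity of Proposition \ref{coercive} to trap $\|u_{n}\|$ between $\delta$ and $C$. The only difference is cosmetic: your sharper two-term H\"{o}lder step gives $\|\phi'(w)\|_{E^{*}}=\|w\|^{p-1}=1$ and hence the constant $2$, where the paper settles for $\|\phi'(w)\|\leq 2$ and the constant $3$.
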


\begin{proof}
	Let $\{w_{n}\} \subset S$ with
	 $ \sup_{n} |\Psi(w_{n})| = \sup_{n} |\Phi(u_{n})| <\infty$, 
	 where $u_{n} = m(w_{n}) \in \mathcal{N}$ for each $n\in \mathbb{N}$.
 On one hand, by Corollary \ref{corollary3.5}, 
	 \begin{equation}\nonumber
	\|\Psi'(w_{n})\|_{S} := \sup_{
		\tiny 	\begin{array}{c}
			z  \in T_{w_{n}}S   \\
			\|z\| =1
	\end{array}          }  \Psi'(w_{n})z
	= \|u_{n}\| \sup_{
		\tiny 	\begin{array}{c}
			z  \in T_{w_{n}}S   \\
			\|z\| =1 \\
	\end{array} } \Phi'(u_{n})z,
\end{equation}
which implies $ \| \Psi'(w_{n}) \|_{S} \leq \|u_{n}\| \|\Phi'(u_{n})\|$.
On the other hand,
	 \begin{equation} \label{A}
	\|\Phi'(u_{n})\| = \sup_{
		\tiny 	\begin{array}{c}
			z  \in T_{w_{n}}S   \\
			z+tw_{n} \neq 0    \\
	\end{array}          }  \frac{\Phi'(u_{n})(z+tw_{n})}{ \|z+tw_{n}\| }
	= \sup_{
		\tiny 		\begin{array}{c}
			z  \in T_{w_{n}}S   \\
			z+tw_{n} \neq 0    \\
	\end{array}   } \frac{\Phi'(u_{n})z}{ \|z+tw_{n}\| },
\end{equation}
where the last equality follows from
$$\Phi'(u_{n})w_{n} = \Phi'(u_{n}) \frac{u_{n}}{\|u_{n}\|} = 0 .$$
Note that, for each $w\in S, z \in T_{w}S$, 
\begin{equation} \nonumber
	\|z\| \leq \|z+tw\| +|t|,
\end{equation}
 \begin{equation} \nonumber
      |t|= |\phi'(w) (z+tw) | \leq  \|\phi'(w)\| \|z+tw\|,
 \end{equation}
where $\phi(w)=\frac{1}{p}\|w\|^{p}$.
Futhermore, by H\"{o}lder inequality,
\begin{equation}\nonumber
	\begin{aligned}
		\phi'(w)v =& \sum_{x,y\in\mathbb{V},x\sim y} \frac{1}{2}|\nabla_{xy} w|^{p-2} \nabla_{xy} w  \nabla_{xy} v + \sum_{ x\in \mathbb{V}} V(x) |w(x)|^{p-2}wv  \\
		\leq &\, \Big(\frac{1}{2}\sum_{x,y\in\mathbb{V},x\sim y} |\nabla_{xy} w |^{p}  \Big)^{\frac{p-1}{p}} \Big(\frac{1}{2}\sum_{x,y\in\mathbb{V},x\sim y} |\nabla_{xy} v|^{p}  \Big) ^{\frac{1}{p}}  
		           + \Big(\sum_{x\in\mathbb{V}} V(x)|w|^{p} \Big)^{\frac{p-1}{p}} \Big(\sum_{x\in\mathbb{V}} V(x)|v|^{p} \Big)^{\frac{1}{p}}\\
		           \leq & \, 2 \|w\|^{p-1} \|v\|
	\end{aligned}
\end{equation}
	for all $v \in T_{w}S$, which implies
	$	\| \phi'(w) \| \leq 2$
and
   \begin{equation} \label{t}
	 \|z\| \leq 3 \|z+tw\|.
   \end{equation}
Plugging \eqref{t} into \eqref{A}, we get 
	\begin{equation}\nonumber
		\| \Phi'(u_{n}) \| \leq 3\sup_{z\in T_{w_{n}}S \backslash \{0\}} \frac{ \Phi'(u_{n})z}{\|z\|}.
	\end{equation}
It follows that by Corollary \ref{corollary3.5},
\begin{equation} \label{``}
	\| \Psi'(w_{n})\|_{S} \leq \|u_{n}\| \| \Phi'(u_{n}) \| \leq 3 \| \Psi'(w_{n}) \|_{S}.
\end{equation}
Futhermore, by  Proposition \ref{3.1 }(ii) and Proposition \ref{coercive}, since $\Phi(u_{n})$ is bounded,
\begin{equation} \nonumber
     \delta \leq \|u_{n}\| \leq C, \quad \forall n\in \mathbb{N}
\end{equation}  
for some $0<\delta <C<\infty$,   proving the result.
\end{proof}
 By $(\ref{``})$ and Proposition \ref{3.1 }(ii), we have the following corollary.
\begin{corollary}
	Let $w \in S$ and $u = m(w) \in \mathcal{N}.$ Then $w\in S$ is a critical point for $\Psi$ whenever $u$ is a critical point of $\Phi$ in $E$.
\end{corollary}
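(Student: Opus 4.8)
The plan is to derive the corollary directly from Corollary~\ref{corollary3.5}, the two-sided estimate $(\ref{``})$, and the direct sum decomposition $E=\mathbb{R}_{w}\oplus T_{w}S$. No new analytic ingredient is needed; the argument is purely algebraic once these facts are in hand.

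The quickest route to the stated implication is through $(\ref{``})$. For each $w\in S$ with $u=m(w)$ that estimate gives, pointwise and not merely along a sequence, the bound $\|\Psi'(w)\|_{S}\le\|u\|\,\|\Phi'(u)\|$. Hence, if $u$ is a critical point of $\Phi$, so that $\Phi'(u)=0$ and $\|\Phi'(u)\|=0$, then $\|\Psi'(w)\|_{S}\le\|u\|\cdot 0=0$, i.e. $\Psi'(w)=0$ and $w$ is a critical point of $\Psi$. This is exactly the assertion.

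For completeness I would also record the converse, which follows from the same inputs. By Corollary~\ref{corollary3.5}, $\Psi'(w)z=\|m(w)\|\,\Phi'(u)z$ for all $z\in T_{w}S$, and $\|m(w)\|=s_{w}\ge\delta>0$ by Proposition~\ref{3.1 }(ii), so $\Psi'(w)$ and $\Phi'(u)$ vanish simultaneously on $T_{w}S$. Moreover $\Phi'(u)$ automatically annihilates the radial direction, since $u=s_{w}w\in\mathcal{N}$ forces $\Phi'(u)u=0$ and hence $\Phi'(u)w=0$; using $E=\mathbb{R}_{w}\oplus T_{w}S$, vanishing of $\Phi'(u)$ on $T_{w}S$ is equivalent to vanishing on all of $E$. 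Combining these observations gives the full equivalence $\Phi'(u)=0\iff\Psi'(w)=0$. There is essentially no obstacle; the only points needing a moment of care are that $\|u\|\ge\delta>0$ (so the factor $\|m(w)\|$ may be cancelled) and that it is precisely membership of $u$ in the Nehari manifold that kills the one remaining direction $w$, upgrading ``vanishing on $T_{w}S$'' to ``vanishing on $E$.''
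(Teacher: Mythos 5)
Your proposal is correct and takes essentially the same route as the paper: the paper's entire proof is the one-line citation of the two-sided estimate $(\ref{``})$ together with Proposition \ref{3.1 }(ii), and your argument is precisely that citation spelled out — the left inequality of $(\ref{``})$ yields the stated implication. Your converse direction (via Corollary \ref{corollary3.5}, the lower bound $\|m(w)\|\geq\delta>0$, the Nehari condition $\Phi'(u)u=0$, and the decomposition $E=\mathbb{R}_{w}\oplus T_{w}S$) is exactly the role Proposition \ref{3.1 }(ii) plays in the paper's citation, so nothing is missing.
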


\section{Proof of Theorem \ref{thm1}}
In this section, we prove Theorem \ref{thm1}.
	We first show that
	\begin{equation}\label{c>0}
		c = \inf_{\mathcal{N}} \Phi  \geq \inf_{ S_{\alpha}} \Phi >0
	\end{equation}
	for some $\alpha >0,$ 	where
	$ S_{\alpha} := \{ u\in E: \|u\| = \alpha \}. $
	By $(A_{3})$, 
	$$\sum_{x\in \mathbb{V}} F(x,u)  = o(\|u\|_{p}^{p}) = o(\|u\|^{p})  \quad \text{ as } \|u\| \to 0.$$
Since
$$\Phi(u) = \frac{1}{p}\|u\|^{p} - \sum_{x\in\mathbb{V}} F(x,u),$$
 there exists $\alpha >0$ such that 
	  $ \inf_{u \in S_{\alpha}} \Phi >0$.
By Proposition \ref{3.2}, for each $u\in \mathcal{N}$, there is a unique $t>0$ such that $tu \in S_{\alpha}$, and  then Proposition \ref{3.1 }(i) implies
	$$\Phi(u) \geq \Phi(tu) \geq \inf_{ S_{\alpha}} \Phi .$$
Taking the infimum over $\mathcal{N}$, we obtain $(\ref{c>0})$.
    Note that $\inf_{ S} \Psi = \inf_{ \mathcal{N}} \Phi = c >0 $ and $\Psi \in C^{1}(S)$.
Then, by Corollary \ref{corollary3.5}, for any $u$ attains $\inf_{ \mathcal{N}} \Phi$, $u$ is a critical point of $\Phi$ in $E$. It suffices to prove that $c=\inf_{ \mathcal{N}} \Phi$ is attainable for some $u \in \mathcal{N}$.
     It follows from the Ekeland's variational principle on Banach manifolds that there is a minimizing sequence $\{ w_{n} \} \subset S$, such that
   $$\Psi (w_{n}) \to c,\, \Psi'(w_{n}) \to 0 \quad \text{ as } n\to \infty. $$
Let $u_{n} = m(w_{n}) \in \mathcal{N}$. Then
	\begin{equation}
	\Phi (u_{n}) \to c,\, \Phi'(u_{n}) \to 0 \quad \text{ as } n\to \infty
   \end{equation} 
by Proposition \ref{3.7}. Moreover,
	\begin{equation}\label{K}
		d:= \sup_{n} \|u_{n}\| < \infty
	\end{equation}
as $\Phi$ is coercive on $\mathcal{N}$.
	 Then, by Lemma \ref{embedding}(ii), there exists $u\in E$ such that
	 $$ u_{n} \rightharpoonup u   \quad \text{ and } \quad  u_{n}(x) \to u(x),\quad\forall x\in \mathbb{V}$$
	after passing to a subsequence.
It follows that, for any $w\in C_{c}(\mathbb{V})$,
	\begin{equation}\notag
		\begin{aligned}
			\langle \Phi'(u_{n}),w \rangle 
		=	& \frac{1}{2}\sum_{x,y\mathbb{V},x\sim y} |\nabla_{xy} u_{n}|^{p-2} (\nabla_{xy} u_{n} ) (\nabla_{xy} w )
		+ \sum_{ x\in \mathbb{V}} V(x) |u_{n}(x)|^{p-2} u_{n} w  -\sum_{x\in\mathbb{V}} f(x,u_{n}) w  \\
		\to	& \frac{1}{2} \sum_{x,y\in\mathbb{V},x\sim y} |\nabla_{xy} u|^{p-2} (\nabla_{xy} u ) (\nabla_{xy} w )+ \sum_{ x\in \mathbb{V}}V(x) |u(x)|^{p-2} u w  -\sum_{x\in\mathbb{V}} f(x,u) w  \\
		=	&\, \langle \Phi'(u),w \rangle.
		\end{aligned}
	\end{equation}
Hence, by (\ref{K}),	
\begin{equation}\label{eq17}
	\Phi'(u) = 0.
\end{equation}
Suppose that $u \neq 0$ in $E$, i.e., $u(x_{0}) \neq 0$ for some $x_{0} \in \mathbb{V}$, by $\Phi'(u)u=0$, we get $u\in \mathcal{N}$ a nontrivial critical point for $\Phi$.
 It remains to show $\Phi(u) = c$. 
 Since $\Phi(u_{n}) \to c$ and $ \Phi'(u_{n}) \to 0$,
\begin{displaymath}
	\begin{aligned}
		c &= \lim_{n\to \infty} \big( \Phi(u_{n}) -\frac{1}{p}\Phi'(u_{n})u_{n}\big)
		=\lim_{n\to \infty} \sum_{x\in\mathbb{V}} \Big(\frac{1}{p}f(x,u_{n}) u_{n} - F(x,u_{n}) \Big)  \\
		& \geq \sum_{x\in\mathbb{V}} \Big(\frac{1}{p}f(x,u)u - F(x,u) \Big) \\
		&= \Phi(u), \\
	\end{aligned}
\end{displaymath}
where the last second inequality follows from Lemma \ref{f&F} and Fatou's lemma.
Taking into consideration $\Phi(u) \leq c =\inf_{ \mathcal{N}} \Phi $ and $u \in \mathcal{N}$, we have $\Phi(u) = c$, proving the result. 
Otherwise, $u_{n}(x) \to u(x) = 0$ for each $x\in \mathbb{V}$.
Suppose that $u_{n} \to 0$ in $\ell^{q}(\mathbb{V})$. By Lemma \ref{epsilon}, for any $\epsilon>0$, there is $C_{\epsilon}>0$ such that 
$$ |\sum_{x\in \mathbb{V}} f(x,u_{n})u_{n}  | 
\leq \epsilon \|u_{n}\|_{p}^{p} + C_{\epsilon}\|u_{n}\|_{q}^{q},\quad 
\forall n \in \mathbb{N}. $$
Then, since $\|u_{n}\|_{p}^{p} \leq V_{1}^{-1}\|u\|^{p}$,
\begin{displaymath}
	\begin{aligned}
		 \Phi'(u_{n})u_{n} &= \|u_{n}\|^{p} - \sum_{x\in\mathbb{V}} f(x,u_{n})u_{n}  \\
		                                &\geq(1- \epsilon V_{1}^{-1})\|u_{n}\|^{p} - C_{\epsilon} \|u_{n}\|_{q}^{q}\\
		                                &=\frac{1}{2} \|u_{n}\|^{p} - C_{\frac{V_{1}}{2}} \|u_{n}\|_{q}^{q},\\
	\end{aligned}
\end{displaymath}
where the last equality follows from taking $\epsilon = \frac{V_{1}}{2}$.
We obtain $\|u_{n}\| \to 0 $ since $\Phi'(u_{n}) \to 0$ and $u_{n} \to 0$ in $\ell^{q}(\mathbb{V}) $ as $n\to \infty$. 
This contradicts Proposition \ref{3.1 }(ii) that $\|u_{n}\| \geq \delta >0$ for each $u_{n}\in \mathcal{N}$.
Thus $u_{n} \nrightarrow 0$ in $\ell^{q}(\mathbb{V})$, i.e., 
$$ \varlimsup_{n} \|u_{n}\|_{q} =: \beta >0. $$ 
The interpolation inequality gives 
$$\|u_{n}\|_{q} \leq \|u_{n}\|_{p}^{\frac{p}{q}} \|u_{n}\|_{\infty}^{\frac{p-q}{q}},$$
so that 
$$\varlimsup_{n} \|u_{n}\|_{\infty} \geq (\frac{\beta^{^{q}}}{d^{^{p}}})^{\frac{1}{q-p}} := 2\gamma >0.$$
We have up to a subsequence $|u_{n}(z_{n})| \geq \gamma >0 $ for some $z_{n} \in \mathbb{V}$, $\forall n\in \mathbb{N}$.
Let $\tilde{u}_{n}(\cdot) = u_{n}(\cdot + k_{n}T)$ with $k_{n} = (k_n^{1}, \cdots ,k_{n}^{N}) \in \mathbb{V}$ such that $\{ z_{n} - k_{n}T \}_{n} \subset \Omega $, where $\Omega = [0,T)^{N} \cap \mathbb{V}$ is a bounded set. Then 
$$ \| \tilde{u}_{n} \|_{\ell^{\infty(\Omega)}} \geq |u_{n}(z_{n})| \geq \gamma >0. $$
Since $f(x,u)$ and $V(x)$ are both T-periodic, we have
\begin{equation}\notag
	 \Phi(\tilde{u}_{n}) = \Phi(u_{n}) \to c,\quad 
	\Phi'(\tilde{u}_{n}) = \Phi'(u_{n}) \to 0 \quad \text{ as } n\to \infty.
\end{equation}
Noting that $\{ \tilde{u}_{n} \}$ is bounded as well, we obtain up to a subsequence
$$ \tilde{u}_{n}\rightharpoonup \tilde{u}\quad\text{ and } \quad  \tilde{u}_{n}(x) \to \tilde{u}(x), \forall x\in \mathbb{V},$$
 as $n\to \infty$. The same reasoning as \eqref{eq17} leads to 
 $$\Phi'(\tilde{u}) = \lim_{n} \Phi'(\tilde{u}_{n}) = 0$$
 with $\tilde{u} \neq 0$.
The same argument as before shows that $\Phi(\tilde{u}) = c$.
 This  proves Theorem \ref{thm1}.
 \\ \hspace*{\fill} \\ 
 
\textbf{Acknowledgements.}
The authors would like to thank Minghao Li for valuabe comments and suggestions. B.H. is supported by NSFC, no. 11831004 and Shanghai Science and Technology Program [Project No. 22JC1400100]. W.Xu is supported by Shanghai Science and Technology Program [Project No. 22JC1400100].

\bibliography{ref}
\bibliographystyle{alpha}

\end{document}